\documentclass[11pt,reqno]{article}
\usepackage[margin=1in]{geometry}
\usepackage{dsfont, amssymb,amsmath,amscd,latexsym, amsthm, amsxtra,amsfonts}
\usepackage{lineno}
\usepackage[all]{xy}
\usepackage[active]{srcltx}

\usepackage{tikz}
\usepackage{float}
\usepackage[round]{natbib}
\usepackage{bbm}
\usepackage{enumerate}
\usepackage{mathrsfs}
\usepackage{graphicx}
\usepackage{comment}
\usepackage{mathtools}
\usepackage{cases}
 \usepackage{tikz}
\usetikzlibrary{calc,arrows}
\usepackage{verbatim}
\usepackage{graphicx}
\usepackage{subfigure}
\usepackage{color}

\usepackage{tikz}
\usetikzlibrary{calc,arrows}
\usepackage{verbatim}
\usepackage{graphicx}
\usepackage{subfigure}
\usepackage{epstopdf}
\usepackage{verbatim}
\usepackage{graphicx}
\usepackage{subfigure}
\usepackage{color}
\usepackage[affil-it]{authblk}

\newtheorem{theorem}{Theorem}[section]
\newtheorem{corollary}[theorem]{Corollary}
\newtheorem{proposition}[theorem]{Proposition}
\newtheorem{lemma}[theorem]{Lemma}

\theoremstyle{definition}
\newtheorem{definition}[theorem]{Definition}

\theoremstyle{definition}

\theoremstyle{definition}
\newtheorem{remark}{Remark}
\theoremstyle{definition}
\newtheorem{assumption}{Assumption}

\renewcommand{\epsilon}{\varepsilon}

\DeclareMathOperator{\var}{Var}

\usepackage[pdfstartview=FitH, bookmarksnumbered=true,bookmarksopen=true, colorlinks=true, citecolor=blue, linkcolor=blue,urlcolor=blue]{hyperref}
\usepackage{graphics}
\graphicspath{{figures/}}
\title{Local $W^{2,1}_{p}$ convergence of penalization for time-inconsistent stopping problems}

\author{Xiaodong Luo\thanks{Department of Applied Mathematics, The Hong Kong Polytechnic University, Kowloon, Hong Kong 
  (\url{xiaodong.luo@polyu.edu.hk}).}
\qquad Xiang Yu\thanks{Department of Applied Mathematics, The Hong Kong Polytechnic University, Kowloon, Hong Kong 
(\url{xiang.yu@polyu.edu.hk}).}	}
\numberwithin{equation}{section}

\usepackage{amsopn}

\date{\vspace{-0.5in}}
\begin{document}

\maketitle

% REQUIRED
\begin{abstract}
This paper studies the penalization method for a general class of time-inconsistent stopping problems. Inspired by some existing results on penalization for optimal stopping problems, we introduce the penalized PDE problem and the penalized control problem for time-inconsistent stopping problems, uncovering some interesting discrepancy stemming from the time-inconsistency. As the penalization tends to infinity, we develop some novel arguments to rigorously establish the local $W^{2,1}_{p}$ convergence results for the solution of the penalized PDE problem to a solution of the variational inequality associated to the time-inconsistent stopping problem. Under an additional concavity assumption, we further derive the convergence of the penalized time-inconsistent control problem to the
time-inconsistent stopping problem. We revisit two examples of time-inconsistent stopping to numerically illustrate the convergence theory.

\vskip 10 pt \noindent
{\bf Mathematics Subject Classification:} 60G40, 93E20, 35R35  
 \vskip 10pt  \noindent
{\bf Keywords:} time-inconsistent stopping, penalization method, local convergence, pure-strategy weak equilibrium, extended HJB variational inequality
\vskip 5pt\noindent
\end{abstract}

\section{Introduction}
Selecting the best time to stop and collect the rewards has always been a fundamental topic in modern decision theory. A wide range of financial applications fit into this type of optimization problems as stop-vs-continue, such as the derivative pricing (e.g., \cite{bensoussan1984theory}, \cite{karatzas1988pricing}, \cite{kim1990analytic}), asset trading (e.g., \cite{rosenfield1983optimal}, \cite{henderson2008explicit}, \cite{dutoit2009selling}), real options (e.g., \cite{brennan1985evaluating}, \cite{mcdonald1986value}, \cite{dixit1994investment}), among others. 

Based on dynamic programming principle or martingale optimality principle, the value function of an optimal stopping problem is typically characterized by the solution of the HJB variational inequality rather than a standard PDE, which gives rise to a free boundary problem with obstacles. To numerically solve such variational inequalities or theoretically prove the existence and the regularity of solutions, the penalization method has been developed as an important tool in the literature. That is, one introduces an additional parameter $K\uparrow+\infty$ or $\epsilon=\frac{1}{K}\downarrow0$ into a penalty term of the system. The penalization then effectively transforms the variational inequality problem into a more tractable nonlinear PDE problem. A standard form of the penalty term is $K(\cdot)^{+}$ (see \cite{scholz1986numerical}, \cite{forsyth2002quadratic}, and more recently \cite{witte2011penalty}, \cite{howison2013effect}, among others), which is not smooth at the origin but is both notationally and computationally simple. Such a form is convenient for numerical or explicit calculation of the solution to the penalized problem. Recent advances in deep learning (\cite{peng2024deep}) and reinforcement learning (\cite{dai2026learninga}) have shed new light on this form of penalization. Other variants of the penalization form are also widely studied in numerical computation as well as in existence theories for the variational inequalities; see \cite{brezis1974smoothness}, \cite{kikuchi1981convergence}, \cite{friedman1982variational}, \cite{zvan1998penalty}, to name a few.

An important mathematical problem related to the penalization method is its theoretical convergence as the penalization goes to infinity. In existing studies on the numerical method of variational inequalities, penalization is rarely analyzed in isolation; rather, it is inherently coupled with spatial and temporal discretizations. Within this framework, a natural question is the $L^{\infty}$ convergence over bounded domains (e.g., \cite{nochetto1989sharpl}, \cite{witte2011penalty}), as it guarantees pointwise accuracy that is essential for evaluating value functions and identifying optimal stopping boundaries. In terms of weak Sobolev convergence, standard results only yield estimates of the first-order derivatives bounded in the $L^2$-norm; see, e.g.,  \cite{kikuchi1981convergence}, \cite{scholz1986numerical}. In another stream of literature where the penalization is considered as a tool to study the existence and regularity of solutions, the weak convergence in Sobolev spaces can usually be strengthened to $W^{2,1}_{p}$. However, in order to establish estimates for weak derivatives, the penalty terms are usually only allowed to be formulated in smooth forms to facilitate the differentiation; see \cite{brezis1974smoothness}, \cite{friedman1975parabolic}, \cite{friedman1982variational}, among others. Recent progress in \cite{peng2024deep, dai2026learninga} for the purpose of machine learning has motivated the study of convergence for non-smooth penalization schemes of the form $K(\cdot)^{+}$ without discretization. However, the $L^{\infty}$ convergence established in these studies strictly relies on the boundedness of a specific expression. Because the boundedness is often violated in general setups with unbounded domains, their applicability remains restricted to a small class of problems such as American option pricing. Noting that state spaces in many financial  applications are naturally unbounded, the localized convergence analysis is a more suitable framework, which is one motivation of this study.

Another new feature of this study is time-inconsistency when the global optimality fails, that is, the optimal decision selected at the initial time may be rejected at later dates. Such phenomenon arises in problems under the non-exponential discounting, mean-variance criteria, etc. Some recent progress on time-inconsistent stopping problems are built on different concepts of pure-strategy equilibrium in continuous-time models that are described as hitting times of some stopping regions. The early work \cite{HuangNguyenHuu2018} proposed the pure-strategy mild equilibrium as the fixed point of an iteration operator and established its existence under decreasing impatient discount functions. See some further extensions in different context in \cite{huang2020general}, \cite{huang2020optimal}, \cite{HY21}, \cite{HW21}, \cite{DYZ25} and references therein. Later, some discussions on pure-strategy weak equilibrium based on perturbations of stopping policy in the intra-personal game was studied in \cite{christensen2018finding}, \cite{ebert2020weighted}, \cite{Bay21} and \cite{Bay23}, etc. The definition of weak equilibrium leads to the extended HJB variational inequality, but most previous studies only established its existence in specific examples such as stopping problems under some particular non-exponential discounting. A general sufficient condition for the existence of pure-strategy weak equilibria remains underdeveloped. To fill this gap, the present paper aims to provide a new sufficient condition to address the existence of pure-strategy weak equilibrium in a general class of time-inconsistent stopping problems using a penalization approach with the non-smooth penalization function $K(\cdot)^{+}$.

The main contributions of this work are summarized as below.

First, we mathematically formulate the penalized {\bf PDE problem} and the penalized {\bf control problem} for a general class of time-inconsistent stopping problems, and reveal a discrepancy in the time-inconsistent setting: the penalized {\bf PDE problem} may not coincide with the HJB system of the penalized {\bf control problem}. Notably, such gap does not appear in the classical time-consistent setting. We note that this distinction comes from the nonlinear expectation in the objective function, and we further find that a concavity condition can connect the solution of the penalized {\bf PDE problem} to the extended HJB system of the penalized {\bf control problem}.

Second, under some nearly minimal conditions, we establish the $W^{2,1}_{p,loc}$ convergence (as well as $W^{2,1}_{p,loc}$ subsequence convergence under weaker conditions) of the solution $(V^{K},f^{K},g^{K})$ of  the penalized {\bf PDE problem} to some $(V,f,g)$ solving the extended variational inequality associated to the time-inconsistent stopping problem. As our main result, Theorem \ref{main_conv2} only requires the pointwise convergence of $V^{K}$, uniform polynomial growth conditions, and mild regularity assumptions on the state dynamics and the free boundary. Notably, the partial pointwise convergence and the polynomial growth conditions are conceptually necessary: On one hand, time-inconsistent problems generally lack the monotonic evolving structure inherent in time-consistent settings, and the equilibria are not necessarily unique; On the other hand, solutions violating polynomial growth are non-unique and financially unreasonable. In contrast, the regularity conditions on the state dynamics and free boundary serve as technical assumptions, required solely to upgrade subsequence convergence to full sequence convergence. 

Third, we also discuss some important extensions of the main convergence theory: The $W^{2,1}_{p,loc}$ subsequence convergence only requires some uniform polynomial growth, some uniform local boundedness as well as some equicontinuity; The equilibrium for the penalized {\bf control problem} converges to a candidate equilibrium for the stopping problem under an additional concavity assumption. Moreover, our theory for time-inconsistent case naturally reduces to the time-consistent case, thus providing a local convergence criterion that supplements current $L^{\infty}$ convergence theory. The theory provides theoretical validity of the penalization approach to study the pure strategy equilibrium for time-inconsistent stopping problems

Finally, we apply the penalization convergence theory to two classic examples of time-inconsistent stopping: one problem under the non-exponential discounting and one problem under mean-variance objective. Instead of directly calculating the equilibrium for the stopping problem, we first calculate the equilibria for the penalized problem and verify the conditions of our theorem to obtain the stopping equilibrium. Notably, global $L^{\infty}$ convergence  is no longer  valid in these two classic examples, which illustrates the necessity for a local convergence theory.

The rest of the paper is organized as follows. Section \ref{Sect_model} reviews a general formulation of time-inconsistent stopping problems, and introduces the penalized {\bf PDE problem} as well as the penalized {\bf control problem}. Section \ref{Sect_conv} is devoted to establishing the main convergence results of the paper.  Section \ref{Sect_coro} presents some important extensions of the main results in Section \ref{Sect_conv}. Section \ref{Sect_example} revisits two classic time-inconsistent stopping problems, and shows the applicability of our local convergence theory with numerical illustrations.

\section{Problem Formulation and the Penalized Problems}
\label{Sect_model}

We first review a general formulation of the time-inconsistent stopping problem as in \cite{bjork2021time}, and introduce the penalized problems. 

\subsection{The time-inconsistent stopping problem}

Let $(\Omega,\mathcal{F},\{\mathcal{F}_{t}\}_{t\ge 0}, \mathbb{P})$ be a filtered probability space satisfying the usual conditions. $\{B_{t}\}_{t\ge 0}$ is a standard one-dimensional Brownian motion on $(\Omega,\mathcal{F},\{\mathcal{F}_{t}\}_{t\ge 0}, \mathbb{P})$.

The time-inconsistent stopping problem at $(x,t)\in Q:=\mathbb{R}\times[0,+\infty)$ is to maximize
\begin{equation}
\label{object_stopping}
J(x,t;\tau):=\mathbb{E}_{x,t}F(X_{\tau},\tau,x,t)+G(\mathbb{E}_{x,t}[X_{\tau}],x,t)
\end{equation}
over the set of all stopping times in $[0,+\infty)$. The underlying state process $\{X_{r}\}_{r\ge t}$ follows the SDE
\begin{equation*}
\begin{cases}
dX_{r}=\mu(X_{r},r)dt+\sigma(X_{r},r)dB_{r},\quad r\in[t,+\infty),\\
X_{t}=x,
\end{cases}
\end{equation*}
where $\mu(\cdot,\cdot)$ and $\sigma(\cdot,\cdot)$ are given deterministic functions on $Q$. The reward functions $(x,t,z,s)\mapsto F(x,t,z,s)$ and $(y,z,s)\mapsto G(y,z,s)$ are also given deterministic functions on $\mathbb{R}\times[0,+\infty)\times \mathbb{R}\times[0,+\infty)$ and $\mathbb{R}\times\mathbb{R}\times[0,+\infty)$.

The time-inconsistency of objective (\ref{object_stopping}) stems from the dependence of initial state-time as well as the non-linear functional of expectation. The model covers different sources of time-inconsistency; see the examples in Section \ref{Sect_example} and the general formulation in \cite{bjork2021time}.

\begin{remark}
The analysis of the paper also works if the objective (\ref{object_stopping}) has an additional running reward term $\mathbb{E}_{x,t}\int_{t}^{\tau}H(X_{r},r,x,t)dr$. However, in many financial applications and in much of the previous literature, the current setup is enough and widely adopted.
\end{remark}

We make the following basic assumptions:
\begin{assumption}
\label{ass_basic}
Assume:\\
(1) $\mu$ and $\sigma$ are Lipschitz continuous, $F\in C^{2,1,2,1}$ and $G\in C^{2,2,1}$.\\
(2) $\mu(x,t)$ and $\sigma(x,t)$ are of at most linear growth in $x$. For $\varphi=F,F_{t},F_{s},F_{x},F_{z},F_{xx},F_{xz},F_{zz}$, there exists $C$ and $M$ independent of $t,s$ such that $|\varphi(x,t,z,s)|\le C(|x|^{M}+|z|^{M}+1)$ for any $(x,t,z,s)$ with $t\ge s$. For $\varphi=G,G_{s},G_{y},G_{z},G_{yy},G_{yz},G_{zz}$, there exists $C$ and $M$ independent of $s$ such that $|\varphi(y,z,s)|\le C(|y|^{M}+|z|^{M}+1)$.
\end{assumption}

In this paper, we focus on the pure Markov stopping strategy. A pure stopping strategy can be presented by some measurable set $W\subset Q$ denoting the continuation region. The stopping time generated by strategy $W$ is denoted by $\tau^{W}=\inf\{r\ge t|(X_{r},r)\notin W\}$. The definition of equilibrium stopping time is as follows.
\begin{definition}[equilibrium stopping time]
Given a continuation region $W$ and the corresponding stopping time $\hat{\tau}:=\tau^{W}$, then $\hat{\tau}$ is an equilibrium stopping time if, for all $(x,t)\in Q$,
\begin{align*}
&J(x,t;\hat{\tau})\ge F(x,t,x,t)+G(x,x,t),\\
&\liminf\limits_{h\rightarrow0+}\frac{J(x,t;\hat{\tau})-J(x,t;\hat{\tau}\vee(t+h))}{h}\ge 0.
\end{align*}
\end{definition}

Under some technical assumptions, the equilibrium stopping time $\tau^{W}$ with the equilibrium continuation region $W$ is characterized by the following extended HJB system for $(V,\{f^{z,s}\}_{(z,s)\in Q},g)$:
\begin{equation}
\label{vi}
\begin{aligned}
&\max\{\mathcal{A}V(x,t)-\mathcal{A}f(x,t,x,t)+\mathcal{A}f^{x,t}(x,t)+(\mathcal{H}g)(x,t)\\&-\mathcal{A}(G\diamond g)(x,t),F(x,t,x,t)+G(x,x,t)-V(x,t)\}=0,\\
&\mathcal{A}f^{z,s}(x,t)=0,\forall (x,t)\in W,\\
&f^{z,s}(x,t)=F(x,t,z,s),\forall (x,t)\notin W,\\
&\mathcal{A}g(x,t)=0,\forall (x,t)\in W,\\
&g(x,t)=x,\forall (x,t)\notin W,
\end{aligned}
\end{equation}
where the equilibrium waiting region has the feedback form
\begin{equation*}
W=\{(t,x)|F(x,t,x,t)+G(x,x,t)-V(x,t)<0\},
\end{equation*}
and the operators are defined by
\begin{align*}
&\mathcal{A}\varphi(x,t):=\varphi_{t}(x,t)+\mu(x,t)\varphi_{x}(x,t)+\frac{1}{2}\sigma^{2}(x,t)\varphi_{xx}(x,t),\quad \forall \varphi,\\
&\mathcal{H}\varphi(x,t):=G_{y}(\varphi(x,t),x,t)\mathcal{A}\varphi(x,t),\quad\forall\varphi,\\
&(G\diamond \varphi)(x,t)=G(\varphi(x,t),x,t),\quad\forall\varphi.
\end{align*}
 $V(x,t)$ presents the equilibrium value function, and $f(x,t,z,s):=f^{z,s}(x,t), g(y,z,s)$ are the auxiliary functions with probabilistic interpretations
\begin{align*}
&V(x,t)=J(x,t;\tau^{W}),\\
&f^{z,s}(x,t)=\mathbb{E}_{x,t}F(X_{\tau^{W}},\tau^{W},z,s)\\
&g(x,t)=\mathbb{E}_{x,t}[X_{\tau^{W}}].
\end{align*}

The main focus of the paper is to tackle the problem (\ref{vi}) by developing the penalization method. Due to the page limit, we omit the technical conditions and the standard verification proof of (\ref{vi}), and refer the details to \cite{christensen2018finding} and \cite{bjork2021time}.

\subsection{The penalized PDE problem and the penalized control problem}

Inspired by the existing results in time-consistent optimal stopping, we introduce the following PDE system for $(V^{K},\{(f^{K})^{z,s}\}_{(z,s)\in Q},g^{K})$ as the penalized {\bf PDE problem} of (\ref{vi}):
\begin{equation}
\label{pen}
\begin{aligned}
&\mathcal{A}V^{K}(x,t)-\mathcal{A}f^{K}(x,t,x,t)+\mathcal{A}(f^{K})^{x,t}(x,t)+(\mathcal{H}g^{K})(x,t)-\mathcal{A}(G\diamond g^{K})(x,t)\\&+K(F(x,t,x,t)+G(x,x,t)-V^{K}(x,t))^{+}=0,\\
&\mathcal{A}(f^{K})^{z,s}(x,t)+K(F(x,t,z,s)-(f^{K})^{z,s}(x,t))1_{\{(x,t)\notin W^{K}\}}=0,\\
&\mathcal{A}g^{K}(x,t)+K(x-g^{K}(x,t))1_{\{(x,t)\notin W^{K}\}}=0,
\end{aligned}
\end{equation}
where
\begin{equation*}
W^{K}:=\{(x,t)|F(x,t,x,t)+G(x,x,t)-V^{K}(x,t)<0\}.
\end{equation*}

We also introduce a penalized {\bf control problem} of (\ref{object_stopping}), which is to maximize 
\begin{equation}
\label{object_penal}
I^{K}(x,t;u):=\mathbb{E}_{x,t}\bigg\{\int_{t}^{\infty}Ku_{r}e^{-K\int_{t}^{r}u_{r'}dr'}F(X_{r},r,x,t)dr\bigg\}+G\bigg(\mathbb{E}_{x,t}\bigg\{\int_{t}^{\infty}Ku_{r}e^{-K\int_{t}^{r}u_{r'}dr'}X_{r}dr\bigg\},x,t\bigg)
\end{equation}
over the admissible control set $\mathcal{U}_{[t,+\infty)}:=\{\{u_{r}\}_{r\in[t,T]}|u_{r}\in[0,1],\forall r\in[t,+\infty)\}$.

However, we remark that the penalized {\bf PDE problem} (\ref{pen}) characterizes the penalized {\bf control problem} (\ref{object_penal}) only when there is no non-linear term, i.e., $G\equiv0$. For example, the property holds for problems with non-exponential discount or initial state dependence, but does not hold for mean-variance maximization problems where $G_{yy}> 0$.

To see this point, we give the verification theorem of the penalized control problem (\ref{object_penal}).
\begin{theorem}
Given $(V^{K},\{(f^{K})^{z,s}\}_{(z,s)\in Q},g^{K})$, assume in addition to Assumption \ref{ass_basic} that:\\
(a) The function $(x,t)\mapsto f^{K}(x,t,x,t)$ and $V^{K}(\cdot,\cdot),(f^{K})^{z,s}(\cdot,\cdot),g^{K}(\cdot,\cdot)$ are all in $W^{2,1}_{p,loc}(Q)\bigcap C(Q)$.\\
(b) The following extended HJB system holds:
\begin{align}
&\begin{aligned}
&\mathcal{A}V^{K}(x,t)-\mathcal{A}f^{K}(x,t,x,t)+\mathcal{A}(f^{K})^{x,t}(x,t)+(\mathcal{H}g^{K})(x,t)-\mathcal{A}(G\diamond g^{K})(x,t)\\&+K\big(F(x,t,x,t)+[G(g^{K}(x,t),x,t)-G_{y}(g^{K}(x,t),x,t)(g^{K}(x,t)-x)]-V^{K}(x,t)\big)^{+}=0,\\
\end{aligned}\label{penv1}\\
&\mathcal{A}(f^{K})^{z,s}(x,t)+K[F(x,t,z,s)-f^{K}(x,t,z,s)]\hat{u}(x,t)=0,\label{penf1}\\
&\mathcal{A}g^{K}(x,t)+K[x-g^{K}(x,t)]\hat{u}(x,t)=0,\label{peng1}
\end{align}
where $\hat{u}(x,t):=1_{\{(x,t)\notin\tilde{W}^{K}\}}$ is the candidate equilibrium feedback control with 
\begin{equation*}
\tilde{W}^{K}:=\{(x,t)\in Q|F(x,t,x,t)+G(g^{K}(x,t),x,t)-G_{y}(g^{K}(x,t),x,t)(g^{K}(x,t)-x)-V^{K}(x,t)<0\}.
\end{equation*}
(c) The transversality condition holds that, for $\varphi=(f^{K})^{z,s},g^{K}$ and $(x,t)\mapsto V^{K}(x,t)-f^{K}(x,t,x,t)-(G\diamond g^{K})(x,t)$,
\begin{equation*}
\lim_{T\to+\infty} \mathbb{E}_{x,t}\big[e^{-K\int_{t}^{T}\hat{u}(X_{r'},r')dr'}\varphi(X_{T},T)\big] = 0.
\end{equation*}
(d) The integrability condition holds that, for $\varphi=F(\cdot,\cdot,z,s),I^{x}(\cdot,\cdot)$,
\begin{equation*}
\mathbb{E}_{x,t}\int_{t}^{\infty} K\hat{u}(X_{r},r)e^{-K\int_{t}^{r}\hat{u}(X_{r'},r')dr'}\big\vert{}\varphi(X_{r},r)\big\vert{}dr < \infty,
\end{equation*}
where $I^{x}$ denotes the function $(x,t)\mapsto x$.\\
(e) There exists $\bar{h}>0$ such that for any $(x,t)\in Q$ and $h\in(0,\bar{h})$, 
\begin{equation*}
\mathbb{E}_{x,t}\sup_{r \in [t, t+h]} \Big| \mathcal{A}g^{K}(X_{r},r)+Ku_{r}\big(X_{r}-g^{K}(X_{r},r)\big) \Big| < \infty.
\end{equation*}
Then $\hat{u}_{r}=\hat{u}(X_{r},r),\forall r\in[t,+\infty)$ is an equilibrium control for (\ref{object_penal}), and $V^{K}$, $(f^{K})^{z,s}$ and $g^{K}$ are respectively the equilibrium value function and the auxiliary value functions given by
\begin{align}
&V^{K}(x,t)=I^{K}(x,t;\hat{u}),\label{intv}\\
&(f^{K})^{z,s}(x,t)=\mathbb{E}_{x,t}\int_{t}^{+\infty}K\hat{u}_{r}e^{-K\int_{t}^{r}\hat{u}_{r'}dr'}F(X_{r},r,z,s)dr,\label{intf}\\
&g^{K}(x,t)=\mathbb{E}_{x,t}\int_{t}^{+\infty}K\hat{u}_{r}e^{-K\int_{t}^{r}\hat{u}_{r'}dr'}X_{r}dr.\label{intg}
\end{align}
\end{theorem}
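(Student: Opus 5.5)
The plan has two parts. First I establish the representations (\ref{intv})--(\ref{intg}) by a Feynman--Kac argument. Then I verify the equilibrium property by a first-order expansion of the objective under a spike perturbation of $\hat u$. Throughout, since the functions are only in $W^{2,1}_{p,loc}(Q)\cap C(Q)$, Itô's formula is replaced by the Itô--Krylov formula, localized by the exit times $\tau_n$ of $X$ from bounded sets $(-n,n)\times[0,n)$. This uses that $\sigma$ is nondegenerate locally, which I take to be implicit in the technical conditions.

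\textbf{Step 1 (representations for $f^K$ and $g^K$).} For $\varphi=(f^K)^{z,s}$ I apply Itô--Krylov to $e^{-K\int_t^{r}\hat u(X_{r'},r')dr'}\varphi(X_r,r)$ on $[t,T\wedge\tau_n]$. Using (\ref{penf1}), the drift equals $-K\hat u\,e^{-K\int\hat u}F(X_r,r,z,s)$, so
\begin{equation*}
\varphi(x,t)=\mathbb{E}_{x,t}\Big[e^{-K\int_t^{T\wedge\tau_n}\hat u\,dr'}\varphi(X_{T\wedge\tau_n},T\wedge\tau_n)\Big]+\mathbb{E}_{x,t}\int_t^{T\wedge\tau_n}K\hat u\,e^{-K\int_t^r\hat u\,dr'}F(X_r,r,z,s)\,dr .
\end{equation*}
I first let $n\to\infty$, using continuity and polynomial growth for the boundary term. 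I then let $T\to\infty$, using the transversality condition (c) for the first term and the integrability condition (d) with dominated convergence for the second. This gives (\ref{intf}). The same argument with (\ref{peng1}) and $\varphi=g^K$ gives (\ref{intg}).

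\textbf{Step 1 (representation for $V^K$).} Set
\begin{equation*}
\Phi(x,t):=V^K(x,t)-f^K(x,t,x,t)-G(g^K(x,t),x,t).
\end{equation*}
By definition of $\tilde W^K$, the penalty term in (\ref{penv1}) equals $K\hat u\,\big(F(x,t,x,t)+G-G_y(g^K-x)-V^K\big)$. I also use $\mathcal{A}(f^K)^{x,t}(x,t)=-K\hat u\,(F(x,t,x,t)-f^K(x,t,x,t))$ from (\ref{penf1}), and $(\mathcal{H}g^K)=G_y\mathcal{A}g^K=-K\hat u\,G_y(x-g^K)$ from (\ref{peng1}). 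Substituting these into (\ref{penv1}), everything cancels except
\begin{equation*}
\mathcal{A}\Phi-K\hat u\,\Phi=0 .
\end{equation*}
The same Feynman--Kac step with zero source, together with condition (c) for $\Phi$, gives $\Phi\equiv0$. Hence $V^K(x,t)=(f^K)^{x,t}(x,t)+G(g^K(x,t),x,t)$, and combining this with (\ref{intf})--(\ref{intg}) at $(z,s)=(x,t)$ yields (\ref{intv}).

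\textbf{Step 2 (equilibrium property).} Fix $(x,t)$ and $v\in[0,1]$, and let $u^h$ equal $v$ on $[t,t+h)$ and $\hat u(X_r,r)$ afterwards. By the Markov property and Step 1,
\begin{equation*}
I^K(x,t;u^h)=\mathbb{E}_{x,t}\Big[e^{-Kv h}(f^K)^{x,t}(X_{t+h},t+h)+\int_t^{t+h}Kve^{-Kv(r-t)}F(X_r,r,x,t)dr\Big]+G\big(\mathbb{E}_{x,t}[\cdots\text{ same with }g^K,\ X_r\cdots],x,t\big).
\end{equation*}
Applying Itô--Krylov on $[t,t+h]$ and using (\ref{penf1})--(\ref{peng1}), the two inner expectations become
\begin{align*}
&(f^K)^{x,t}(x,t)+hK(v-\hat u(x,t))\big(F(x,t,x,t)-f^K(x,t,x,t)\big)+o(h),\\
&g^K(x,t)+hK(v-\hat u(x,t))\big(x-g^K(x,t)\big)+o(h).
\end{align*}
A first-order Taylor expansion of $G$ in $y$ and the identity $V^K=f^K+G\diamond g^K$ then give
\begin{equation*}
I^K(x,t;u^h)-V^K(x,t)=hK\big(v-\hat u(x,t)\big)\Big[F(x,t,x,t)+G(g^K,x,t)-G_y(g^K,x,t)(g^K-x)-V^K\Big]+o(h).
\end{equation*}
The bracket is $\ge0$ where $\hat u=1$, in which case $v-\hat u\le0$. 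It is $<0$ where $\hat u=0$, in which case $v-\hat u\ge0$. In both cases the leading term is $\le0$, so $\liminf_{h\to0^+}\big(V^K-I^K(u^h)\big)/h\ge0$, which is the equilibrium condition.

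\textbf{Main obstacle.} The delicate point is justifying the $o(h)$ remainders. The integrands $\mathcal{A}(f^K)^{x,t}$ and $\mathcal{A}g^K$ are only $L^p_{loc}$ and $\hat u$ is discontinuous, so $\frac1h\mathbb{E}\int_t^{t+h}(\cdot)\,dr$ does not converge to the value at $(x,t)$ by mere continuity. For the $g^K$ term I would first use assumption (e) to dominate and pass to the limit. For the $f^K$ term I would try to rewrite the integrand through the equation, as a continuous function times $K(v-\hat u)$ or $K\hat u$, so that only the discontinuity of $\hat u$ along $\partial\tilde W^K$ remains. That residual set I would handle by Krylov's estimate, if necessary restricting to points off $\partial\tilde W^K$ or using the sign structure of the bracket near the boundary.
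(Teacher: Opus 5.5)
Your overall route is the paper's. You use It\^{o}--Krylov/Feynman--Kac with (c)--(d) for $(f^K)^{z,s}$ and $g^K$. You use the same cancellation turning (\ref{penv1}) into $\mathcal{A}\Phi-K\hat u\,\Phi=0$ (the paper's (\ref{eq1step2})), followed by transversality for $\Phi$. And you use a spike perturbation whose first-order term $K(v-\hat u)B$, with $B:=F(x,t,x,t)+G(g^K,x,t)-G_y(g^K,x,t)(g^K-x)-V^K$, is exactly the paper's bound obtained by replacing $K(\cdot)^+$ in (\ref{penv1}) with $Ku_t(\cdot)$. Your constant-$v$ spike is cleaner than the paper's arbitrary $u\in\mathcal{U}_{[t,+\infty)}$, whose limit tacitly needs right-continuity of $u$ at $t$. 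Your nondegeneracy assumption is genuinely needed for It\^{o}--Krylov, which the paper uses without comment.

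The step that is not yet a proof is the $h\downarrow0$ limit, and your tentative plan for it would fail. The paper disposes of it by ``Fatou's lemma'' and by evaluating $\mathcal{A}(f^K)^{x,t}$ and $\mathcal{A}g^K$ at the single point $(x,t)$, which is no more rigorous. Your term-by-term expansions, however, put the coefficient $K(v-\hat u(x,t))$ on $F-f^K$ and on $x-g^K$ separately, and these are false in general at points of $\partial\tilde W^K$. There $\hat u(x,t)=1$, but $\frac1h\mathbb{E}_{x,t}\int_t^{t+h}\hat u(X_r,r)\,dr$ typically tends to a value strictly between $0$ and $1$. For instance, it tends to $1/2$ for a threshold region $\{x\ge x^*_K\}$ with nondegenerate $\sigma$, and for rough boundaries it need not converge at all. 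Moreover, neither $F(x,t,x,t)-f^K(x,t,x,t)$ nor $x-g^K(x,t)$ has to vanish there. Krylov's estimate does not help at a fixed starting point. Discarding boundary points would not give the equilibrium property at every $(x,t)$.

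The missing observation is to combine the two terms after the Taylor expansion of $G$. By (\ref{penf1})--(\ref{peng1}), the total first-order integrand is $R^v_r\,K\bigl(v-\hat u(X_r,r)\bigr)\tilde B(X_r,r)$, where
$\tilde B(y,r):=F(y,r,x,t)-(f^K)^{x,t}(y,r)+G_y(g^K(x,t),x,t)\bigl(y-g^K(y,r)\bigr)$.
This $\tilde B$ is continuous by (a), and $\tilde B(x,t)=B(x,t)$ by your identity $V^K(x,t)=f^K(x,t,x,t)+G(g^K(x,t),x,t)$. Since $B$ is continuous, $\tilde W^K$ is open and $B=0$ on $\partial\tilde W^K$. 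Two cases then arise:
\begin{itemize}
\item If $B(x,t)\neq0$, then $\hat u(X_r,r)=\hat u(x,t)$ for all $r$ close to $t$, pathwise.
\item If $B(x,t)=0$, then $|K(v-\hat u)\tilde B(X_r,r)|\le K|\tilde B(X_r,r)|\to0$, however $\hat u$ oscillates.
\end{itemize}
Dominated convergence then gives $\frac1h\bigl(I^K(x,t;u^h)-V^K(x,t)\bigr)\to K\bigl(v-\hat u(x,t)\bigr)B(x,t)\le0$ at every point, with no Krylov argument and no exceptional set. The $G$-Taylor remainder is $O(h^2)$ by (e).

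The domination requires $\mathbb{E}_{x,t}\sup_{r\in[t,t+h]}|\tilde B(X_r,r)|<\infty$. Condition (e) with $u\equiv v$ is precisely this bound for the $g^K$-part. The $(f^K)^{x,t}$-part, however, needs a growth or integrability bound that is not among (a)--(e). The same is true of the polynomial growth of $f^K,g^K$ that you invoke when sending $n\to\infty$ in Step 1. The paper's proof has the same omission, so you should state such a bound as an extra hypothesis.
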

\begin{proof}
{\bf step 1:} We prove that $f^{K}$ and $g^{K}$ has the probability interpretation (\ref{intf})$\sim$(\ref{intg}).

For notational simplicity, define $R^{u}_{r}=e^{-K\int_{t}^{r}u_{r'}dr'}, \forall r\ge t$, then $d R^{u}_{r}=-KR^{u}_{r}u_{r}dr$. Define the stopping time $\tau_{n}:=\inf\{r\ge t||X_{r}|\ge n\}$ for all $n\ge 1$. Using It\^{o}-Krylov formula and (\ref{penf1}) yields that
\begin{align*}
\mathbb{E}_{x,t}R^{\hat{u}}_{T\wedge\tau_{n}}(f^{K})^{z,s}(X_{T\wedge\tau_{n}},T\wedge\tau_{n})-(f^{K})^{z,s}(x,t)=&\mathbb{E}_{x,t}\int_{t}^{T\wedge\tau_{n}}R^{\hat{u}}_{r}\Big[-K\hat{u}_{r}(f^{K})^{z,s}(X_{r},r)+\mathcal{A}(f^{K})^{z,s}(X_{r},r)\Big]dr\\
=&-\mathbb{E}_{x,t}\int_{t}^{T\wedge\tau_{n}}K\hat{u}_{r}R^{\hat{u}}_{r}F(X_{r},r,z,s)dr,
\end{align*}
where the It\^{o} integral term vanishes as $(f^{K})^{z,s}\in C^{1+\alpha,\frac{1+\alpha}{2}}(Q)$ implies the continuity of $(f^{K})^{z,s}_{x}$. Assumption \ref{ass_basic} ensures that $\lim\limits_{n\rightarrow\infty}\tau_{n}=+\infty$ almost surely.
Thanks to conditions (c) and (d), taking $n\rightarrow\infty$ and $T\rightarrow+\infty$ and using the dominated convergence theorem, we obtain (\ref{intf}). The proof of (\ref{intg}) is similar and omitted. For notational simplicity but less rigorously, we just write $\tau_{n}$ rather than $T\wedge \tau_{n}$, and only take $n\rightarrow\infty$ in later proofs.

{\bf step 2:} We prove (\ref{intv}). Using the results of the previous step, we only need to show that
\begin{equation}
\label{step2eq}
V^{K}(x,t)=f^{K}(x,t,x,t)+(G\diamond g^{K})(x,t).
\end{equation}

Let $(s,z)=(t,x)$ in (\ref{penf1}) and plug into (\ref{penv1}) with (\ref{peng1}), and we obtain
\begin{equation}
\label{eq1step2}
\begin{aligned}
&\mathcal{A}V^{K}(x,t)-\mathcal{A}f^{K}(x,t,x,t)+G_{y}(g^{K}(x,t),x,t)[K\hat{u}g^{K}(x,t)-K\hat{u}x]-\mathcal{A}(G\diamond g^{K})(x,t)\\&+K\hat{u}[G(g^{K}(x,t),x,t)-G_{y}(g^{K}(x,t),x,t)(g^{K}(x,t)-x)]-K\hat{u}V^{K}(x,t)+K\hat{u}f^{K}(x,t,x,t)=0.
\end{aligned}
\end{equation}
Applying It\^{o}-Krylov formula, it holds that
\begin{align*}
&\mathbb{E}_{x,t}R^{\hat{u}}_{\tau_{n}}V^{K}(X_{\tau_{n}},\tau_{n})-V^{K}(x,t)=\mathbb{E}_{x,t}\int_{t}^{\tau_{n}}\Big[-K\hat{u}_{r}V^{K}(X_{r},r)+\mathcal{A}V^{K}(X_{r},r)\Big]R^{\hat{u}}_{r}dr,\\
&\mathbb{E}_{x,t}R^{\hat{u}}_{\tau_{n}}f^{K}(X_{\tau_{n}},\tau_{n},X_{\tau_{n}},\tau_{n})-f^{K}(x,t,x,t)=\mathbb{E}_{x,t}\int_{t}^{\tau_{n}}\Big[-K\hat{u}_{r}f^{K}(X_{r},r,X_{r},r)+\mathcal{A}f^{K}(X_{r},r,X_{r},r)\Big]R^{\hat{u}}_{r}dr.\\
&\mathbb{E}_{x,t}R^{\hat{u}}_{\tau_{n}}(G\diamond g^{K})(X_{\tau_{n}},\tau_{n})-(G\diamond g^{K})(x,t)=\mathbb{E}_{x,t}\int_{t}^{\tau_{n}}\Big[-K\hat{u}_{r}(G\diamond g^{K})(X_{r},r)+\mathcal{A}(G\diamond g^{K})(X_{r},r)\Big]R^{\hat{u}}_{r}dr.
\end{align*}
Then, combining the last three formulas with (\ref{eq1step2}), we have
\begin{align*}
&[V^{K}(x,t)-f^{K}(x,t,x,t)-G(g^{K}(x,t),x,t)]\\
=&\mathbb{E}_{x,t}\int_{t}^{\tau_{n}}\Big[K\hat{u}_{r}V^{K}(X_{r},r)-\mathcal{A}V^{K}(X_{r},r)-K\hat{u}_{r}f^{K}(X_{r},r,X_{r},r)+\mathcal{A}f^{K}(X_{r},r,X_{r},r)\\&-K\hat{u}_{r}G(g^{K}(X_{r},r),X_{r},r)+\mathcal{A}(G\diamond g^{K})(X_{r},r)\Big]R^{\hat{u}}_{r}dr\\
&+\mathbb{E}_{x,t}R^{\hat{u}}_{\tau_{n}}[V^{K}(X_{\tau_{n}},\tau_{n})-f^{K}(X_{\tau_{n}},\tau_{n},X_{\tau_{n}},\tau_{n})-(G\diamond g^{K})(X_{\tau_{n}},\tau_{n})]\\
=&\mathbb{E}_{x,t}R^{\hat{u}}_{\tau_{n}}[V^{K}(X_{\tau_{n}},\tau_{n})-f^{K}(X_{\tau_{n}},\tau_{n},X_{\tau_{n}},\tau_{n})-(G\diamond g^{K})(X_{\tau_{n}},\tau_{n})].
\end{align*}
Thanks to condition (c), taking $n\rightarrow\infty$ in the last formula yields (\ref{step2eq})

{\bf step 3:} We prove that $\hat{u}$ is an equilibrium control policy. Recall that the perturbed control $u^{h}$ is defined by
\begin{equation*}
u^{h}_{r}=\begin{cases}
u_{r}, & r\in[t,t+h),\\
\hat{u}(r,X_{r}), & r\in[t+h,+\infty)
\end{cases}
\end{equation*}
with $u\in\mathcal{U}_{[t,+\infty)}$ being an arbitrary admissible control. Define 
\begin{align*}
&f^{u^{h}}(t,x,s,z):=\mathbb{E}_{x,t}\bigg\{e^{-K\int_{t}^{T}u^{h}_{r'}dr'}F(X_{T},T,z,s)+\int_{t}^{T}Ku^{h}_{r}e^{-K\int_{t}^{r}u^{h}_{r'}dr'}F(X_{r},r,z,s)dr\bigg\},\\
&g^{u^{h}}(x,t):=\mathbb{E}_{x,t}\bigg\{e^{-K\int_{t}^{T}u^{h}_{r'}dr'}X_{T}+\int_{t}^{T}Ku^{h}_{r}e^{-K\int_{t}^{r}u^{h}_{r'}dr'}X_{r}dr\bigg\}.
\end{align*}
It then holds that $f^{u^{h}}(X_{t+h},t+h,x,t)=f^{K}(X_{t+h},t+h,x,t)$ and $
g^{u^{h}}(X_{t+h},t+h)=g^{K}(X_{t+h},t+h)$. Direct calculation yields the fact that
\begin{equation}
\label{eq1step3}
\begin{aligned}
&I^{K}(x,t;u^{h})-I^{K}(x,t;\hat{u})\\
&=\mathbb{E}_{x,t}\bigg\{\int_{t}^{t+h}Ke^{-K\int_{t}^{r}u_{r'}dr'}F(X_{r},r,x,t)u_{r}dr+e^{-K\int_{t}^{t+h}u_{r}dr}f^{K}(X_{t+h},t+h,x,t)-f^{K}(x,t,x,t)\bigg\}\\
&\quad +G\bigg(\mathbb{E}_{x,t}\bigg\{\int_{t}^{t+h}Ke^{-K\int_{t}^{r}u_{r'}dr'}X_{r}u_{r}dr+e^{-K\int_{t}^{t+h}u_{r}dr}g^{K}(X_{t+h},t+h)\bigg\},x,t\bigg)-G(g^{K}(x,t),x,t).
\end{aligned}
\end{equation}
Applying It\^{o}-Krylov formula to $R^{u}_{r}(f^{K})^{x,t}(X_{r},r)$ and $R^{u}_{r}g^{K}(X_{r},r)$ from $t$ to $t+h$, we have
\begin{align*}
&\mathbb{E}_{x,t}\big[R^{u}_{t+h}(f^{K})^{x,t}(X_{t+h},t+h)\big]-(f^{K})^{x,t}(x,t)=\mathbb{E}_{x,t}\int_{t}^{t+h}R^{u}_{r}\big[-Ku_{r}(f^{K})^{x,t}(X_{r},r)+\mathcal{A}(f^{K})^{x,t}(X_{r},r)\big]dr,\\
&\mathbb{E}_{x,t}\big[R^{u}_{t+h}g^{K}(X_{t+h},t+h)\big]-g^{K}(x,t)=\mathbb{E}_{x,t}\int_{t}^{t+h}R^{u}_{r}\big[-Ku_{r}g^{K}(X_{r},r)+\mathcal{A}g^{K}(X_{r},r)\big]dr.
\end{align*}
Substitute the last two formula back into (\ref{eq1step3}) and utilize the Taylor expansion of $G$ around $g^{K}(x,t)$ as $h\downarrow 0$ with condition (e), we obtain that
\begin{align*}
&I^{K}(x,t;u^{h})-I^{K}(x,t;\hat{u})\\
=&\mathbb{E}_{x,t}\int_{t}^{t+h}R^{u}_{r}\Big[\mathcal{A}(f^{K})^{x,t}(X_{r},r)+K u_{r}\big(F(X_{r},r,x,t)-f^{K}(X_{r},r,x,t)\big)\Big]dr\\
&+G_{y}(g^{K}(x,t),x,t)\mathbb{E}_{x,t}\int_{t}^{t+h}R^{u}_{r}\Big[\mathcal{A}g^{K}(X_{r},r)+Ku_{r}\big(X_{r}-g^{K}(X_{r},r)\big)\Big]dr+o(h).
\end{align*}
Dividing by $h$ and letting $h\downarrow 0$, we obtain from Fatou's lemma that
\begin{equation}
\label{eq2step3}
\begin{aligned}
&\limsup_{h\downarrow 0}\frac{I^{K}(x,t;u^{h})-I^{K}(x,t;\hat{u})}{h}\\
&\le\mathcal{A}(f^{K})^{x,t}(x,t)+Ku_{t}\big(F(x,t,x,t)-f^{K}(x,t,x,t)\big)+G_{y}(g^{K}(x,t),x,t)\Big[\mathcal{A}g^{K}(x,t)+Ku_{t}\big(x-g^{K}(x,t)\big)\Big]\\
&=\mathcal{A}(f^{K})^{x,t}(x,t)+(\mathcal{H}g^{K})(x,t)+Ku_{t}\Big[F(x,t,x,t)-f^{K}(x,t,x,t)-G_{y}(g^{K}(x,t),x,t)\big(g^{K}(x,t)-x\big)\Big].
\end{aligned}
\end{equation}

From (\ref{step2eq}), we have
\begin{equation*}
\mathcal{A}V^{K}(x,t)=\mathcal{A}f^{K}(x,t,x,t)+\mathcal{A}(G\diamond g^{K})(x,t).
\end{equation*}
Substituting (\ref{step2eq}), (\ref{penv1}) and the last formula into (\ref{eq2step3}), we obtain
\begin{align*}
&\limsup_{h\downarrow 0}\frac{I^{K}(x,t;u^{h})-I^{K}(x,t;\hat{u})}{h}\\
\le &\mathcal{A}V^{K}(x,t)-\mathcal{A}f^{K}(x,t,x,t)+\mathcal{A}(f^{K})^{x,t}(x,t)+(\mathcal{H}g^{K})(x,t)-\mathcal{A}(G\diamond g^{K})(x,t)\\&+Ku_{t}\big[F(x,t,x,t)+[G(g^{K}(x,t),x,t)-G_{y}(g^{K}(x,t),x,t)(g^{K}(x,t)-x)]-V^{K}(x,t)\big]\\
\le&0.
\end{align*}
\end{proof}

Nevertheless, the solution to the candidate limit equations of (\ref{penv1})$\sim$(\ref{peng1}) also solves the stopping problem (\ref{object_stopping}) when the non-linear term is concave, i.e., $G_{yy}\le 0$. This property is presented in the following lemma.
\begin{lemma}
\label{lem_relation}
If $G_{yy}\le 0$, and $(V,\{f^{z,s}\}_{(z,s)\in Q},g)$ solves
\begin{align*}
&
\begin{aligned}\max\{&\mathcal{A}V(x,t)-\mathcal{A}f(x,t,x,t)+\mathcal{A}f^{x,t}(x,t)+(\mathcal{H}g)(x,t)-\mathcal{A}(G\diamond g)(x,t),\\&F(x,t,x,t)+G(g(x,t),x,t)-G_{y}(g(x,t),x,t)(g(x,t)-x)-V(x,t)\}=0,
\end{aligned}\\
&\mathcal{A}f^{z,s}(x,t)=0,\forall (x,t)\in \tilde{W},\\
&f^{z,s}(x,t)=F(x,t,z,s),\forall (x,t)\notin \tilde{W},\\
&\mathcal{A}g(x,t)=0,\forall (x,t)\in \tilde{W},\\
&g(x,t)=x,\forall (x,t)\notin \tilde{W},
\end{align*}
with 
\begin{equation*}
\tilde{W}:=\{(x,t)\in Q|F(x,t,x,t)+G(g(x,t),x,t)-G_{y}(g(x,t),x,t)(g(x,t)-x)-V(x,t)<0\},
\end{equation*}
then $(V,\{f^{z,s}\}_{(z,s)\in Q},g)$ is also a solution to (\ref{vi}).
\end{lemma}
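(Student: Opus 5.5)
The plan is to compare the two obstacles pointwise using concavity. Then we show that the two waiting regions $\tilde W$ and $W$ coincide. Once $W=\tilde W$, the equations for $f^{z,s}$ and $g$ in (\ref{vi}) are literally those in the hypothesis of the lemma. So only the variational inequality for $V$ has to be checked.

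\emph{Step 1 (ordering of obstacles).} Write
\[
\Phi(x,t):=F(x,t,x,t)+G(x,x,t),\qquad \tilde\Phi(x,t):=F(x,t,x,t)+G(g(x,t),x,t)-G_y(g(x,t),x,t)(g(x,t)-x).
\]
Since $y\mapsto G(y,x,t)$ is concave ($G_{yy}\le 0$), it lies below its tangent line at $y=g(x,t)$:
\[
G(x,x,t)\le G(g(x,t),x,t)+G_y(g(x,t),x,t)\,(x-g(x,t)).
\]
Hence $\Phi\le\tilde\Phi$ everywhere on $Q$. The max-equation in the hypothesis gives $\tilde\Phi-V\le 0$ on $Q$, and therefore $\Phi-V\le0$ on $Q$ as well.

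\emph{Step 2 (the regions coincide).} On the set $\{(x,t)\notin\tilde W\}$ we have $g(x,t)=x$ by hypothesis, so the tangent correction vanishes and $\tilde\Phi(x,t)=\Phi(x,t)$. Moreover, not being in $\tilde W$ together with Step 1 forces $\tilde\Phi-V=0$ there. Thus $\Phi-V=0$ off $\tilde W$, i.e.\ $Q\setminus\tilde W\subset Q\setminus W$. Conversely, on $\tilde W$ we have $\Phi-V\le\tilde\Phi-V<0$, so $\tilde W\subset W$. Hence $W=\tilde W$, and the equations for $f^{z,s}$ and $g$ in (\ref{vi}) hold as stated.

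\emph{Step 3 (the variational inequality).} The differential part
\[
\mathcal{A}V-\mathcal{A}f(\cdot,\cdot,\cdot,\cdot)+\mathcal{A}f^{x,t}+\mathcal{H}g-\mathcal{A}(G\diamond g)
\]
is identical in both systems, and it is $\le0$ everywhere by hypothesis. By Step 1, the obstacle term $\Phi-V$ is also $\le0$ everywhere. It remains to show that at each point at least one of the two terms vanishes. If the differential part is $<0$ at $(x,t)$, the hypothesis forces $\tilde\Phi-V=0$ there, so $(x,t)\notin\tilde W$. By Step 2 this gives $\Phi-V=0$ at $(x,t)$. Hence the maximum in (\ref{vi}) equals $0$ everywhere.

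The only substantive point is Step 2. There one must use the boundary condition $g=x$ off $\tilde W$ to kill the tangent correction, and the concavity inequality inside $\tilde W$. No regularity issues arise, since all identities are pointwise (or a.e.) and involve the same derivative terms in both systems.
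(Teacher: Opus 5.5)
Your proposal is correct and follows the same route as the paper: the tangent-line inequality from $G_{yy}\le 0$ gives $\tilde W\subset W$, and the condition $g=x$ off $\tilde W$ kills the correction term, which gives the reverse inclusion, so $W=\tilde W$. Your Step 3 then explicitly verifies the max-equation for $V$, using $\Phi\le\tilde\Phi\le V$ everywhere and $\Phi=V$ off $\tilde W$; this is a check the paper leaves implicit in ``the conclusion follows.''
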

\begin{proof}
On one hand, for any $(x,t)\in \tilde{W}^{c}$, it holds that $g(x,t)=x$ and $F(x,t,x,t)+G(g(x,t),x,t)-G_{y}(g(x,t),x,t)(g(x,t)-x)-V(x,t)\ge 0$, then $F(x,t,x,t)+G(x,x,t)-V(x,t)\ge 0$, which implies $(x,t)\in W^{c}$. On the other hand, for any $(x,t)\in \tilde{W}$, it holds that $F(x,t,x,t)+G(g(x,t),x,t)-G_{y}(g(x,t),x,t)(g(x,t)-x)-V(x,t)<0$. Note that $G_{yy}\le 0$ implies that $G(g(x,t),x,t)-G_{y}(g(x,t),x,t)(g(x,t)-x)\ge G(x,x,t)$. Then $F(x,t,x,t)+G(x,x,t)-V(x,t)< 0$, which implies $(x,t)\in W$.

Combining two results, we deduce that $\tilde{W}=W$, and the conclusion follows.
\end{proof}

In the following, we mainly focus on the convergence of the penalized PDE problem (\ref{pen}) towards (\ref{vi}). Under the condition $G_{yy}\le 0$, the convergence of penalized extended HJB system (\ref{penv1})$\sim$(\ref{peng1}) towards (\ref{vi}) holds similarly thanks to Lemma \ref{lem_relation}.

 \section{Convergence Analysis of Penalization}
 \label{Sect_conv}

 This section gives the main results of this paper. Subsection \ref{con_sub} gives the convergence of two subproblems of (\ref{pen}). Subsection \ref{con_tog} combines the two subproblems to obtain the overall convergence and an existence result.

 In the following, $p$ is sufficiently large to satisfy the embedding of $W^{2,1}_{p}$ into $C^{1+\alpha,\frac{1+\alpha}{2}}$ for some $\alpha\in(0,1)$.

\subsection{Convergence of two subproblems}
\label{con_sub}

The penalized PDE problem (\ref{pen}) can be decoupled into two subproblems: The first formula of (\ref{pen}) constitutes the first subproblem, which is a problem of solving $V^{K}$ given $(\{(f^{K})^{z,s}\}_{(z,s)\in Q},g^{K})$; The other four formulas of (\ref{pen}) constitute the second subproblem, which is a problem of solving $(\{(f^{K})^{z,s}\}_{(z,s)\in Q},g^{K})$ given $V^{K}$.

The next theorem gives the convergence of the first subproblem.

\begin{theorem}
\label{conv_1}
Under Assumption \ref{ass_basic}, given $(\{(f^{K})^{z,s}\}_{(z,s)\in Q},g^{K})$ for any $K\ge 1$ satisfying:\\
(a) $(f^{K})^{z,s},g^{K}\in W^{2,1}_{p,loc}(Q)\bigcap C(Q)$ for any $(z,s)\in Q$ and $K\ge 1$.\\
(b) As $K\rightarrow\infty$, $(f^{K})^{z,s}$ and $g^{K}$ converge to some functions $f^{z,s}$ and $g$ weakly in $W^{2,1}_{p,loc}(Q)$.\\
(c) There exists $C$ and $M$ independent of $t,s,K$ such that, for $\varphi=(f^{K})^{z,s}_{s},(f^{K})^{z,s}_{z},(f^{K})^{z,s}_{xz},(f^{K})^{z,s}_{zz}$ it holds $|\varphi(x,t,z,s)|\le C(|x|^{M}+|z^{M}|+1)$; and for $\varphi=g^{K},g^{K}_{t},g^{K}_{x},g^{K}_{xx}$, it holds that $|\varphi(x,t)|\le C(|x|^{M}+1)$.\\
Suppose there exists $V^{K}$ as a $W^{2,1}_{p,loc}(Q)\bigcap C(Q)$ solution to
\begin{align*}
&\mathcal{A}V^{K}(x,t)-\mathcal{A}f^{K}(x,t,x,t)+\mathcal{A}(f^{K})^{x,t}(x,t)+(\mathcal{H}g^{K})(x,t)-\mathcal{A}(G\diamond g^{K})(x,t)\\&+K(F(x,t,x,t)+G(x,x,t)-V^{K}(x,t))^{+}=0.
\end{align*}
Assume there exists $T_{0}>0$ such that $|V^{K}(x,T)|\le C(|x|^{M}+1)$ for any $T\in[T_{0},+\infty)$ for some constants $C$ and $M$. Then $V^{K}$ has a subsequence $V^{K}_{n}$ that converges as $K_{n}\rightarrow\infty$ weakly in $W^{2,1}_{p,loc}(Q)$ to a function $V$, which satisfies almost everywhere in $Q$ that
\begin{equation}
\label{limit_V}
\begin{aligned}
\max\{&\mathcal{A}V(x,t)-\mathcal{A}f(x,t,x,t)+\mathcal{A}f^{x,t}(x,t)+(\mathcal{H}g)(x,t)-\mathcal{A}(G\diamond g)(x,t),\\
&F(x,t,x,t)+G(x,x,t)-V(x,t)\}=0.
\end{aligned}
\end{equation}
Moreover, if for $\tau^{K}:=\inf\{r\ge t|F(X_{r},r,X_{r},r)+G(X_{r},X_{r},r)-V^{K}(X_{r},r)\ge 0\}$, it holds that
\begin{equation}
\label{addition_cond}
\begin{aligned}
&\sup_{K \ge 1} \mathbb{E}_{x,t} \left[ \int_t^{\tau^K} (1 + \vert{}X_r\vert{}^M) dr \right] < \infty,\\
&\sup_{K \ge 1} \mathbb{E}_{x,t} \left[ 1 + \vert{}X_{\tau^{K}}\vert{}^M \right] < \infty,\\
&\lim_{T\rightarrow\infty} \sup_K \mathbb{E}_{x,t}\big[(1+\vert{}X_T\vert{}^M)\mathbf{1}_{\{\tau^K>T\}}\big]=0,
\end{aligned}
\end{equation}
then the sequence $V^{K}$ itself converges to $V$.
\end{theorem}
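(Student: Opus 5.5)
The plan is to view the $V^{K}$-equation as a classical penalized obstacle problem with a $K$-dependent source term. Set
\[
\Psi(x,t):=F(x,t,x,t)+G(x,x,t),\qquad
h^{K}(x,t):=-\mathcal{A}f^{K}(x,t,x,t)+\mathcal{A}(f^{K})^{x,t}(x,t)+(\mathcal{H}g^{K})(x,t)-\mathcal{A}(G\diamond g^{K})(x,t),
\]
so the equation reads $\mathcal{A}V^{K}+h^{K}+K(\Psi-V^{K})^{+}=0$.

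\textbf{Step 1 (bound on the source).} Expanding $\mathcal{A}$ of the diagonal map $(x,t)\mapsto f^{K}(x,t,x,t)$ by the chain rule, the terms $(f^{K})^{z,s}_{t},(f^{K})^{z,s}_{x},(f^{K})^{z,s}_{xx}$ evaluated at $(z,s)=(x,t)$ cancel against $\mathcal{A}(f^{K})^{x,t}$. What remains involves only $(f^{K})_{s},(f^{K})_{z},(f^{K})_{xz},(f^{K})_{zz}$. Similarly, $\mathcal{H}g^{K}-\mathcal{A}(G\diamond g^{K})$ involves $G_{s},G_{z},G_{yy},G_{yz},G_{zz}$ composed with $g^{K},g^{K}_{x}$. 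By (c) and Assumption \ref{ass_basic}, this gives $|h^{K}(x,t)|\le C(|x|^{M'}+1)$ uniformly in $K$ and $t$. By (b), $h^{K}\rightharpoonup h$ weakly in $L^{p}_{loc}$, where $h$ is the analogous expression built from $(f,g)$; the remaining products are continuous functions times weakly convergent second derivatives, and the lower-order factors converge locally uniformly by compact embedding into $C^{1+\alpha,\frac{1+\alpha}{2}}$.

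\textbf{Step 2 (uniform local bounds).} I would obtain $|V^{K}|\le C(|x|^{M}+1)$ uniformly in $K$ on $\mathbb{R}\times[0,T]$ for each fixed $T\ge T_{0}$. For the upper bound, apply a comparison principle on $\mathbb{R}\times[0,T]$ with terminal data $V^{K}(\cdot,T)$, using supersolutions of the form $e^{\lambda(T-t)}C(1+x^{2m})$: the penalty term is nonnegative only where $V^{K}<\Psi$, and $\Psi$ has polynomial growth. For the lower bound, use $V^{K}\ge$ a polynomial subsolution together with $K(\Psi-V^{K})^{+}\ge0$. Comparison in polynomial-growth classes (Phragmén–Lindelöf type) is legitimate under linear-growth coefficients.

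The key estimate is then the uniform $L^{p}_{loc}$ bound on $K(\Psi-V^{K})^{+}$. I would first try a maximum-principle argument on $w=K(\Psi-V^{K})$. Formally, at a positive maximum of $w\eta$ (with $\eta$ a cutoff), $\mathcal{A}w=K(\mathcal{A}\Psi+h^{K})+Kw$, which yields $w\le C_{loc}$ where $\mathcal{A}\Psi$ is bounded. Since $V^{K}$ is only in $W^{2,1}_{p}$, I would make this rigorous through Bony's maximum principle, or through a localized comparison of $V^{K}$ with $\Psi-C/K$. Interior $W^{2,1}_{p}$ estimates then bound $\|V^{K}\|_{W^{2,1}_{p}(D)}$ on compact sets $D$, and this step is the main obstacle.

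\textbf{Step 3 (subsequential limit).} Weak compactness and a diagonal argument over an exhaustion of $Q$ give $V^{K_{n}}\rightharpoonup V$ in $W^{2,1}_{p,loc}$, with locally uniform convergence. Since $(\Psi-V^{K})^{+}\le C/K$, we get $V\ge\Psi$. From $\mathcal{A}V^{K}+h^{K}\le0$ and weak limits, $\mathcal{A}V+h\le0$ a.e. On the open set $\{V>\Psi\}$, locally $V^{K}>\Psi$ for large $K$, so the penalty vanishes and $\mathcal{A}V+h=0$ there. On $\{V=\Psi\}$, the equality $\mathcal{A}V=\mathcal{A}\Psi$ holds a.e. for Sobolev functions, so the max expression is $0$. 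Together these give (\ref{limit_V}).

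\textbf{Step 4 (full convergence).} By It\^o–Krylov, each $V^{K}$ admits the representation
\[
V^{K}(x,t)=\mathbb{E}_{x,t}\Big[\int_{t}^{\tau^{K}}\big(h^{K}+K(\Psi-V^{K})^{+}\big)dr+V^{K}(X_{\tau^{K}},\tau^{K})\Big]
\]
up to truncation at $T$, and on $[t,\tau^{K})$ the penalty vanishes. I would show that every subsequential limit $V$ equals the value of the stopping problem
\[
\sup_{\tau}\mathbb{E}\Big[\int_{t}^{\tau}h\,dr+\Psi(X_{\tau},\tau)\Big].
\]
This identifies the limit uniquely, so the whole sequence converges. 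The conditions (\ref{addition_cond}) supply uniform integrability for passing to the limit in these representations: the polynomial bounds from Step 1 dominate the integrands, and the tail condition handles $T\to\infty$. They also justify the verification argument for $V$. Note that the weak convergence $h^{K}\rightharpoonup h$ against the occupation measure of $X$ requires a density argument (Krylov's estimate), which is a secondary difficulty.
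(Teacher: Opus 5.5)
Your proposal is correct and follows the paper's proof essentially step for step. The steps are: a $K$-uniform polynomial bound on $V^{K}$; a maximum-principle bound on the penalty $K(\Psi-V^{K})^{+}$; interior $W^{2,1}_{p}$ estimates with a diagonal argument and identification of (\ref{limit_V}) exactly as in the paper's Step 3; and uniqueness of subsequential limits, obtained by recognizing $V$ as the value of an optimal stopping problem with running reward $h$ and terminal reward $\Psi$, with (\ref{addition_cond}) justifying the verification argument. The differences are only technical: you use comparison with explicit super/subsolutions where the paper uses the It\^o--Krylov representation with killing rate $Ku^{K}$, and a cutoff-localized maximum principle where the paper uses the global weight $\phi(x)=(x^{2}+1)^{q}$. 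One caveat is shared with the paper, which simply asserts the weak convergence $h^{K}\rightharpoonup h$ (there $S_{1}^{K}\rightharpoonup S_{1}$). Your stated reason for it does not hold: after your own cancellation, $h^{K}$ contains no second-order $(x,t)$-derivatives, only the diagonal values of $(f^{K})_{s},(f^{K})_{z},(f^{K})_{xz},(f^{K})_{zz}$ (besides $g^{K},g^{K}_{x}$), and hypothesis (b), which concerns each fixed $(z,s)$, does not control their convergence.
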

\begin{proof}
Define 
\begin{align*}
&S^{K}_{1}(x,t):=-\mathcal{A}f^{K}(x,t,x,t)+\mathcal{A}(f^{K})^{x,t}(x,t)+(\mathcal{H}g^{K})(x,t)-\mathcal{A}(G\diamond g^{K})(x,t),\\
&S_{1}(x,t):=-\mathcal{A}f(x,t,x,t)+\mathcal{A}f^{x,t}(x,t)+(\mathcal{H}g)(x,t)-\mathcal{A}(G\diamond g)(x,t),\\
&S_{2}(x,t):=F(x,t,x,t)+G(x,x,t).
\end{align*}
By the assumptions, we have that $S^{K}_{1}(x,t)\in L^{p}_{loc}(Q), S_{2}(x,t)\in C^{2,1}(Q)$, and 
\begin{equation}
\label{bound_SK}
|S^{K}_{1}(x,t)|+|S_{2}(x,t)|+|\mathcal{A}S_{2}(x,t)|\le C(|x|^{M}+1)
\end{equation}
for some constants $C$ and $M$. 

The proof is split into four main steps.

{\bf Step 1: the local uniform polynomial bound of $V^{K}$:}

Define $u^{K}(x,t):=1_{\{S_{2}(x,t)-V^{K}(x,t)\ge 0\}}$ and $u^{K}_{r}:=u^{K}(X_{r},r)$. Define $\tau_{n}:=\inf\{r\ge t||X_{r}|>n\}$. Fix $T>T_{0}$ sufficiently large. Applying It\^{o}-Krylov formula for $R^{u^{K}}_{r}V^{K}(X_{r},r)$ with $R^{u}_{r}:=e^{-K\int_{t}^{r}u_{r'}dr'}$, we have  that
\begin{align*}
\mathbb{E}_{x,t}R^{u^{K}}_{T\wedge\tau_{n}}V^{K}(X_{T\wedge\tau_{n}},T\wedge\tau_{n})-V^{K}(x,t)=&\mathbb{E}_{x,t}\int_{t}^{T\wedge\tau_{n}}R^{u^{K}}_{r}\Big[\mathcal{A}V^{K}(X_{r},r)-Ku^{K}V^{K}(X_{r},r)\Big]dr\\
=&\mathbb{E}_{x,t}\int_{t}^{T\wedge\tau_{n}}R^{u^{K}}_{r}\Big[-S^{K}_{1}(X_{r},r)-Ku^{K}_{r}S_{2}(X_{r},r)\Big]dr,
\end{align*}
where the It\^{o} integral term vanishes as the continuous function $V^{K}_{x}$ is bounded on arbitrary compact subset of $Q$. Under Assumption \ref{ass_basic}, it holds that $\tau_{n}\rightarrow+\infty$ almost surely. Thanks to the polynomial growth of $S^{K}_{1},S_{2}$ and $V^{K}(\cdot,T)$ for $T>T_{0}$, taking $n\rightarrow\infty$ and using the dominated convergence theorem, we obtain that
\begin{equation*}
\mathbb{E}_{x,t}R^{u^{K}}_{T}V^{K}(X_{T},T)-V^{K}(x,t)=\mathbb{E}_{x,t}\int_{t}^{T}R^{u^{K}}_{r}\Big[-S^{K}_{1}(X_{r},r)-Ku^{K}_{r}S_{2}(X_{r},r)\Big]dr.
\end{equation*}
Note that $|\int_{t}^{T}Ku^{K}_{r}R^{u^{K}}_{r}S_{2}(X_{r},r)dr|\le\sup\limits_{r\in[t,T]}|S_{2}(X_{r},r)|\int_{t}^{T}Ku^{K}_{r}R^{u^{K}}_{r}dr\le \sup\limits_{r\in[t,T]}|S_{2}(X_{r},r)|$, we deduce that $V^{K}$ has polynomial growth uniformly in $K$ on $\mathbb{R}\times[0,T]$ with $T\in[T_{0},+\infty)$:
\begin{align}
|V^{K}(x,t)|\le&\mathbb{E}_{x,t}\Big[T\sup\limits_{r\in[t,T]}|S^{K}_{1}(X_{r},r)|+\sup\limits_{r\in[t,T]}|S_{2}(X_{r},r)|+V^{K}(X_{T},T)\Big]\notag\\
\le& \tilde{C}^{(T)}\Big[\mathbb{E}_{x,t}\sup\limits_{r\in[t,T]}|X_{r}|^{M}+1\Big]:=\Psi^{(T)}(x,t),\forall (x,t)\in\mathbb{R}\times[0,T],\label{bound_VK}
\end{align}
for some constants $\tilde{C}^{(T)}$ and $M$. The function $\Psi^{(T)}(x,t)$ is of polynomial growth in $x$ and it is dependent of $T$ but independent of $K$.

{\bf Step 2: the uniform bound of penalty term:}

We next show the boundedness of $KU^{K}(x,t)$ where 
\begin{equation*}
U^{K}(x,t):=(S_{2}(x,t)-V^{K}(x,t))^{+}.
\end{equation*}
Define $W^{K}:=\{(x,t)|S_{2}(x,t)-V^{K}(x,t)<0\}$, then we have $KU^{K}(x,t)=0$ on $W^{K}$, and 
\begin{equation*}
\mathcal{A}V^{K}(x,t)+S^{K}_{1}(x,t)+KU^{K}(x,t)=0
\end{equation*}
on $(W^{K})^{c}$. Then, we have
\begin{equation}
\label{eq_wkc}
\mathcal{A}U^{K}(x,t)=\mathcal{A}S_{2}(x,t)+S^{K}_{1}(x,t)+KU^{K}(x,t),\quad\forall (x,t)\in (W^{K})^{c}.
\end{equation}
Define a function $\phi(x):=(x^{2}+1)^{q}$ with $q$ sufficiently large to dominate the polynomial growth of $U^{K}(x,t)=(S_{2}(x,t)-V^{K}(x,t))^{+}$ obtained in {\bf Step 1} and $\mathcal{A}S_{2}(x,t)+S^{K}_{1}(x,t)$. Thanks to the linear growth of $\mu(x,t)$ and $\sigma(x,t)$, there exists $C'>0$ such that
\begin{equation*}
\mathcal{A}\phi(x)\le C'\phi(x).
\end{equation*}

Let us define and consider the function $w^{K}(x,t):=\frac{U^{K}(x,t)}{\phi(x)}$. It holds that $w^{K}(x,t)=0$ on $\partial (W^{K})^{c}$, $\limsup\limits_{t\rightarrow+\infty}\frac{U^{K}(x,t)}{\phi(x)}\le C'$ for some constant $C'>0$, and that $w^{K}(x,t)\rightarrow0$ as $x\rightarrow\infty$. Therefore, to show that $w^{K}$ is bounded on $(W^{K})^{c}$, we only need to show boundedness in the interior. Suppose that $(x_{K},t_{K})$ is a maximum of $w^{K}$ on $(W^{K})^{c}$ in the interior of $(W^{K})^{c}$. Since
\begin{align*}
&0\ge w^{K}_{t}(x_{K},t_{K})=\frac{U^{K}_{t}\phi-U^{K}\phi_{t}}{\phi^{2}}\Big|_{(x_{K},t_{K})},\\
&0=w^{K}_{x}(x_{K},t_{K})=\frac{U^{K}_{x}\phi-U^{K}\phi_{x}}{\phi^{2}}\Big|_{(x_{K},t_{K})},\\
&0\ge w^{K}_{xx}(x_{K},t_{K})=\Big[\frac{U^{K}_{xx}\phi-U^{K}\phi_{xx}}{\phi^{2}}-2\frac{\phi_{x}}{\phi}\frac{U^{K}_{x}\phi-U^{K}\phi_{x}}{\phi^{2}}\Big]\Big|_{(x_{K},t_{K})},
\end{align*}
we have that
\begin{equation*}
0\ge \mathcal{A}w^{K}(x_{K},t_{K})=\Big[\frac{\mathcal{A}U^{K}}{\phi}-\frac{U^{K}\mathcal{A}\phi}{\phi^{2}}\Big]\Big|_{(x_{K},t_{K})}.
\end{equation*}
Then we obtain that
\begin{equation*}
\mathcal{A}U^{K}(x_{K},t_{K})\le \frac{U^{K}(x_{K},t_{K})}{\phi(x_{K})}\mathcal{A}\phi(x_{K})\le C'U^{K}(x_{K},t_{K}).
\end{equation*}
Substituting the last formula into (\ref{eq_wkc}), we deduce that
\begin{equation*}
\mathcal{A}S_{2}(x_{K},t_{K})+S^{K}_{1}(x_{K},t_{K})+KU^{K}(x_{K},t_{K})\le C'U^{K}(x_{K},t_{K}).
\end{equation*}
Then, for $K>C'$, we have
\begin{equation*}
(K-C')w^{K}(x_{K},t_{K})\le\frac{\mathcal{A}S_{2}(x_{K},t_{K})+S^{K}_{1}(x_{K},t_{K})}{\phi(x_{K})}\le\bar{C}
\end{equation*}
for some constant $\bar{C}$ independent of $K$. The last inequality is due to the fact that $\phi(x,t)$ dominates the polynomial growth of $\mathcal{A}S_{2}(x,t)+S^{K}_{1}(x,t)$. Since $(x_{K},t_{K})$ is the maximum of $w^{K}$ on $(W^{K})^{c}$, then it holds that $KU^{K}(x,t)\le\frac{K\bar{C}\phi(x,t)}{K-C'}\le\bar{C}'\phi(x)$ on $(W^{K})^{c}$ for some $\bar{C}'>0$, and we arrive at the conclusion that 
\begin{equation}
\label{bound_source}
0\le KU^{K}(x,t)\le\bar{C}'\phi(x),\quad \forall (x,t)\in Q.
\end{equation}

{\bf Step 3: subsequence convergence and limit equations:}

Using the results of previous two steps and the interior $L^{p}$ estimate, it holds that for any compact cylinders $\mathcal{S}\subset\mathcal{S}'\subset Q$,
\begin{align*}
\Vert V^{K}\Vert_{W^{2,1}_{p}(\mathcal{S})}\le C\Big[\Vert S^{K}_{1}\Vert_{L^{p}(\mathcal{S}')}+\Vert KU^{K}\Vert_{L^{p}(\mathcal{S}')}+\Vert V^{K}\Vert_{L^{p}(\mathcal{S}')}\Big]\le C''
\end{align*}
for some constant $C''$ independent of $K$. Then, $\{V^{K}\}$ is precompact in $W^{2,1}_{p}(\mathcal{S})$ for any $\mathcal{S}$. Take an arbitrary sequence of increasing compact cylinder sets $\mathcal{S}_{m}\uparrow Q$. Using the standard diagonal argument, there exists a subsequence $\{V^{K_{n}}\}$ converging weakly in each  $W^{2,1}_{p}(\mathcal{S}_{m})$ to some function $V\in W^{2,1}_{p,loc}(Q)$. Then $V$ is of polynomial growth in $x$. Define $W=\{(x,t)|S_{2}(x,t)-V(x,t)<0\}$. 

As $\Vert K_n U^{K_n}\Vert_{L^p(\mathcal{S})}$ is uniformly bounded in $K$, we have
\begin{equation*}
\Vert U^{K_n}\Vert_{L^p(\mathcal{S})} = \frac{1}{K_n} \Vert K_n U^{K_n}\Vert_{L^p(\mathcal{S})} \le \frac{C''}{K_n} \longrightarrow 0 \quad \text{as } n \to \infty.
\end{equation*}
This implies $U^{K_n} = (S_2 - V^{K_n})^+ \to (S_2 - V)^+$ in $L^p_{loc}(Q)$. By uniform convergence on compact sets, $(S_2(x,t) - V(x,t))^+ = 0$ pointwise on $Q$, i.e.,
\begin{equation}
\label{eq_1}
S_2(x,t) - V(x,t) \le 0, \quad \forall (x,t) \in Q.
\end{equation}

Note that $K_n U^{K_n}(x,t) \ge 0$ a.e., it holds that $\mathcal{A}V^{K_n}(x,t) + S_1^{K_n}(x,t) = -K_n U^{K_n}(x,t) \le 0$ a.e. For any non-negative test function $\varphi \in C_c^\infty(Q)$ ($\varphi \ge 0$), we have
\begin{equation*}
\int_Q \big(\mathcal{A}V^{K_n}(x,t) + S_1^{K_n}(x,t)\big)\varphi(x,t) dxdt \le 0.
\end{equation*}
In view that $\mathcal{A}V^{K_n} + S_1^{K_n} \rightharpoonup \mathcal{A}V + S_1$ weakly in $L^p_{loc}(Q)$, taking $n \to \infty$ gives
\begin{equation*}
\int_Q \big(\mathcal{A}V(x,t) + S_1(x,t)\big)\varphi(x,t) dxdt \le 0,
\end{equation*}
which implies
\begin{equation}
\label{eq_2}
\mathcal{A}V(x,t) + S_1(x,t) \le 0 \quad \text{a.e. in } Q.
\end{equation}

For any compact set $\mathcal{S} \subset W$, by continuity of $V - S_2$, there exists $\delta > 0$ such that $V(x,t) - S_2(x,t) \ge \delta > 0$ on $\mathcal{S}$. By uniform convergence of $V^{K_n} - S_2 \to V - S_2$ on $\mathcal{S}$, for $n$ sufficiently large we have
\begin{equation*}
V^{K_n}(x,t) - S_2(x,t) \ge \frac{\delta}{2} > 0, \quad \forall (x,t) \in \mathcal{S}.
\end{equation*}
Thus $U^{K_n}(x,t) = (S_2(x,t) - V^{K_n}(x,t))^+ = 0$ identically on $\mathcal{S}$ for all large $n$. Consequently,
\begin{equation*}
\mathcal{A}V^{K_n}(x,t) + S_1^{K_n}(x,t) = 0 \quad \text{a.e. on } \mathcal{S}.
\end{equation*}
Taking weak limit in $L^p(\mathcal{S})$ as $n \to \infty$ yields
\begin{equation}
\label{eq_3}
\mathcal{A}V(x,t) + S_1(x,t) = 0 \quad \text{a.e. on } W.
\end{equation}

Combining (\ref{eq_1}), (\ref{eq_2}) and (\ref{eq_3}), we obtain that the limit function $V\in W^{2,1}_{p,loc}(Q)$, with polynomial growth in $x$, satisfies (\ref{limit_V}) a.e.. 

{\bf Step 4: convergence of whole sequence:}

We only need to show that any converging subsequence of $V^{K}$ converges to the same limit under the condition (\ref{addition_cond}).

Let $V^{K_{n}}$ be an arbitrary converging subsequence to some $V$. According to {\bf Step 3}, $V$ is a $W^{2,1}_{p,loc}(Q)$ solution to (\ref{limit_V}). Define $\tau^{V}=\inf\{r\ge t|S_{2}(X_{r},r)-V(X_{r},r)\ge 0\}$. Define the following set of stopping times:
\begin{align*}
\mathcal{T}:=\Big\{\tau\Big|&\mathbb{E}_{x,t} \Big[ \int_t^{\tau} (1 + \vert{}X_r\vert{}^M) dr \Big] < \infty,\quad \mathbb{E}_{x,t} \Big[ 1 + \vert{}X_\tau\vert{}^M \Big] < \infty,\\
&\lim_{T\rightarrow\infty} \mathbb{E}_{x,t}\big[(1+\vert{}X_T\vert{}^M)\mathbf{1}_{\{\tau>T\}}\big]=0.\Big\}.
\end{align*}
Notice that $\tau^{K_{n}}$ converges to $\tau^{V}$ almost surely. Using condition (\ref{addition_cond}) and Fatou's lemma, it holds that $\tau^{V}\in\mathcal{T}$.

For any $\tau\in \mathcal{T}$, $\tau_{n}:=\inf\{r\ge t|\vert{}X_{r}\vert{}\ge n\}$ and $T>T_{0}$, using the It\^{o}-Krylov formula, we get that
\begin{equation*}
\mathbb{E}_{x,t}V(X_{T\wedge\tau\wedge\tau_{n}},T\wedge\tau\wedge\tau_{n})-V(x,t)=\mathbb{E}_{x,t}\int_{t}^{T\wedge\tau\wedge\tau_{n}}\mathcal{A}V(X_{r},r)dr.
\end{equation*}
Sending $n\rightarrow\infty$ and using (\ref{limit_V}), we deduce that
\begin{equation*}
\mathbb{E}_{x,t}S_{2}(X_{\tau},\tau)1_{\tau\le T}+\mathbb{E}_{x,t}V(X_{T},T)1_{\tau>T}-V(x,t)\le\mathbb{E}_{x,t}\int_{t}^{T\wedge\tau}-S_{1}(X_{r},r)dr,
\end{equation*}
where equality holds when $\tau=\tau^{V}$. As $|V^{K}(x,T)|\le C(|x|^{M}+1)$ for $T>T_{0}$, it also holds if $V^{K}$ is replaced by the limit $V$. By virtue of the polynomial growth of $S_{1},S_{2},V$ and the definition of $\mathcal{T}$, we can apply the dominated convergence theorem to obtain that
\begin{equation}
\label{present_V}
V(x,t)=\sup\limits_{\tau\in\mathcal{T}} \mathbb{E}_{x,t}\bigg[\int_{t}^{\tau}S_{1}(X_{r},r)dr+S_{2}(X_{\tau},\tau)\bigg],
\end{equation}
where the maximum is achieved with $\tau=\tau^{V}$.

Then any convergent subsequence of $V^{K}$ converges to the same $V$ uniquely given by (\ref{present_V}), and the proof is complete.
\end{proof}

\begin{remark}
In Theorem \ref{conv_1}, $V^{K}$ solves a second-order semi-linear parabolic PDE with the non-linear term $K(S^{K}_{2}(x,t)-V^{K}(x,t))$ Lipschitzs in $V^{K}$. The regularity of solutions to this type of PDE can be $W^{2,1}_{p,loc}$.
\end{remark}

The next theorem gives the convergence of the second subproblem.

\begin{theorem}
\label{conv_2}
Under Assumption \ref{ass_basic}, given $V^{K}$ for any $K\ge 1$ satisfying:\\
(a) $V^{K}$ converges to some function $V$ pointwise.\\
Suppose there exist $(f^{K})^{z,s}$ and $g^{K}$ as $W^{2,1}_{p,loc}(Q)\bigcap C(Q)$ solutions to 
\begin{align*}
&\mathcal{A}(f^{K})^{z,s}(x,t)+K(F(x,t,z,s)-(f^{K})^{z,s}(x,t))1_{(x,t)\notin W^{K}}=0,\\
&\mathcal{A}g^{K}(x,t)+K(x-g^{K}(x,t))1_{(x,t)\notin W^{K}}=0,
\end{align*}
where
\begin{equation*}
W^{K}:=\{(x,t)|F(x,t,x,t)+G(x,x,t)-V^{K}(x,t)<0\}.
\end{equation*}
Assume there exist $T^{z,s}_{0},T_{0},M$ such that $|(f^{K})^{z,s}(x,T)|\le C^{z,s}(|x|^{M}+1)$ for any $T\in[T^{z,s}_{0},+\infty)$ for some constant $C^{z,s}$, and $|g^{K}(x,T)|\le C(|x|^{M}+1)$ for any $T\in[T_{0},+\infty)$ for some constants $C$. Then $((f^{K})^{z,s}, g^{K})$ has a subsequence $((f^{K_{n}})^{z,s},g^{K_{n}})$ that converges as $K_{n}\rightarrow\infty$ weakly in $W^{2,1}_{p,loc}(Q)$ to some $(f^{z,s}, g)$, which satisfies almost everywhere in $Q$ that
\begin{equation}
\label{limit_fg}
\begin{aligned}
&\mathcal{A}f^{z,s}(x,t)=0,\forall (x,t)\in W,\\
&F(x,t,z,s)-f^{z,s}(x,t)=0,\forall (x,t)\in W^{c},\\
&\mathcal{A}g(x,t)=0,\forall (x,t)\in W,\\
&x-g(x,t)=0,\forall (x,t)\in W^{c},
\end{aligned}
\end{equation}
where
\begin{equation*}
W:=\{(x,t)|F(x,t,x,t)+G(x,x,t)-V(x,t)<0\}.
\end{equation*}
Moreover, if it holds for $\tau^{V}:=\inf\{r\ge t|F(X_{r},r,X_{r},r)+G(X_{r},X_{r},r)\ge V(X_{r},r)\}$ that
\begin{equation}
\label{addition_cond_V}
\begin{aligned}
&\mathbb{E}_{x,t}\int_{t}^{\tau^{V}}(1+|X_{r}|^{M})dr<\infty,\\
&\mathbb{E}_{x,t}\big[1+|X_{\tau^{V}}|^{M}\big]<\infty,\\
&\lim\limits_{T\rightarrow\infty}\mathbb{E}_{x,t}\big[(1+|X_{T}|^{M})1_{\{\tau^{V}>T\}}\big]=0,
\end{aligned}
\end{equation}
then $((f^{K})^{z,s},g^{K})$ converges to $(f^{z,s}, g)$.
\end{theorem}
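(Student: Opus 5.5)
I will follow the four-step scheme of the proof of Theorem \ref{conv_1}, treating $\varphi^{K}\in\{(f^{K})^{z,s},g^{K}\}$ together. Each solves a linear equation
\[
\mathcal{A}\varphi^{K}+K u^{K}(\Phi-\varphi^{K})=0,\qquad u^{K}:=1_{\{(x,t)\notin W^{K}\}},
\]
with obstacle datum $\Phi=F(\cdot,\cdot,z,s)$ or $\Phi=I^{x}$. Both data are $C^{2,1}$ and satisfy $|\Phi|+|\mathcal{A}\Phi|\le C(|x|^{M}+1)$ by Assumption \ref{ass_basic}.

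\textbf{Step 1 (uniform polynomial bound).} Fix $T$ beyond $T_{0}$ and $T^{z,s}_{0}$. I apply the It\^{o}--Krylov formula to $R^{u^{K}}_{r}\varphi^{K}(X_{r},r)$, localize with $\tau_{n}$, and let $n\to\infty$, exactly as in Step 1 of Theorem \ref{conv_1}. This gives
\[
\varphi^{K}(x,t)=\mathbb{E}_{x,t}\Big[\int_{t}^{T}Ku^{K}_{r}R^{u^{K}}_{r}\Phi(X_{r},r)\,dr+R^{u^{K}}_{T}\varphi^{K}(X_{T},T)\Big].
\]
Since $\int_t^T Ku^{K}_{r}R^{u^{K}}_{r}dr\le1$, we get $|\varphi^{K}|\le\Psi^{(T)}$ on $\mathbb{R}\times[0,T]$ uniformly in $K$.

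\textbf{Step 2 (penalty term).} I will show that $K u^{K}|\Phi-\varphi^{K}|$ is locally bounded uniformly in $K$. This is the main obstacle. Unlike Theorem \ref{conv_1}, the penalty is linear with an indicator, so it is not a positive part, and $\Phi-\varphi^{K}$ has no sign. I would instead use the linear structure. Set $\psi^{K}:=\varphi^{K}-\Phi$, which solves
\[
\mathcal{A}\psi^{K}-Ku^{K}\psi^{K}=-\mathcal{A}\Phi,
\]
with right-hand side independent of $K$ and of polynomial growth. I then run the weighted maximum principle of Step 2 of Theorem \ref{conv_1} on $\pm\psi^{K}/\phi$, with $\phi=(x^{2}+1)^{q}$.
\begin{itemize}
\item At an interior positive maximum in $(W^{K})^{c}$, the argument gives $(K-C')\psi^{K}/\phi\le\bar C$.
\item In $W^{K}$, $\psi^{K}$ satisfies $\mathcal{A}\psi^{K}=-\mathcal{A}\Phi$, which imposes no $K$-constraint there. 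So I apply the comparison instead to $\tilde\psi:=\psi^{K}e^{-\lambda t}/\phi$ globally on $Q\cap\{t\le T\}$. On $(W^{K})^{c}$ this yields $|\psi^{K}|\le \bar C'\phi/K$, hence $K u^{K}|\psi^{K}|\le\bar C'\phi$.
\end{itemize}
Should the maximum fall in $W^{K}$, I would bound it from boundary values on $\partial W^{K}$ (already $O(1/K)$) plus the uniform bound of Step 1. Failing that, I would fall back on the Feynman--Kac representation: $\psi^{K}=\mathbb{E}\int R^{u^{K}}\mathcal{A}\Phi\,dr+\dots$, and on $(W^{K})^{c}$ the killing rate $K$ acts immediately, giving an $O(1/K)$ estimate locally.

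\textbf{Step 3 (subsequence limit and equations).} Interior $L^{p}$ estimates give a local $W^{2,1}_{p}$ bound uniform in $K$, and a diagonal argument yields weak $W^{2,1}_{p,loc}$ convergence of a subsequence, which is uniform on compacts. For a compact set $\mathcal{S}\subset W$, the pointwise convergence $V^{K}\to V$ together with continuity does not directly give uniform convergence. I would use Step 3 of Theorem \ref{conv_1} to get local uniform convergence of $V^{K}$, or otherwise argue via dominated convergence that $u^{K}\to0$ a.e. on $W$. Then $Ku^{K}\psi^{K}$, being bounded in $L^{p}$ and supported where $u^{K}=1$, tends to $0$ weakly on $W$, giving $\mathcal{A}\varphi=0$ on $W$. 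On $W^{c}$, the bound $\|u^{K}\psi^{K}\|_{L^p}\le C/K$ and pointwise convergence of $u^{K}\to1$ on the interior of $W^{c}$ give $\varphi=\Phi$ there. On $\partial W$, continuity of $\varphi$ and $\Phi$ extends this, and the boundary has measure zero almost everywhere as needed for \eqref{limit_fg}.

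\textbf{Step 4 (full sequence).} Any subsequential limit solves \eqref{limit_fg}. By It\^{o}--Krylov up to $T\wedge\tau^{V}\wedge\tau_{n}$, with $\mathcal{A}\varphi=0$ before $\tau^{V}$ and $\varphi=\Phi$ at $\tau^{V}$, condition \eqref{addition_cond_V} and dominated convergence give the unique representation $\varphi(x,t)=\mathbb{E}_{x,t}\Phi(X_{\tau^{V}},\tau^{V})$. This identifies the limit as $f^{z,s}(x,t)=\mathbb{E}_{x,t}F(X_{\tau^{V}},\tau^{V},z,s)$ and $g=\mathbb{E}_{x,t}X_{\tau^{V}}$, so the whole sequence converges.
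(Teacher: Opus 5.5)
\noindent Your outline follows the paper's four steps, and your Steps 1 and 4 are essentially the paper's. The genuine gap is Step 2, and it cannot be closed. The bound you aim for, $Ku^{K}|\Phi-\varphi^{K}|\le\bar C'\phi$ locally uniformly in $K$, is false in general. The set $W^{K}$ is determined by $V^{K}$, not by $\varphi^{K}$, so the auxiliary functions satisfy no smooth-fit condition on $\partial W^{K}$. There $\varphi^{K}$ is $C^{1}$, but $\varphi^{K}-\Phi$ is typically of exact order $K^{-1/2}$. The penalty is therefore of order $K^{1/2}$ on a boundary layer of width $K^{-1/2}$.

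Both of your fallbacks assume what fails. The values on $\partial W^{K}$ are not ``already $O(1/K)$''; they are precisely what has to be estimated. The Feynman--Kac killing gives $O(1/K)$ only outside that layer.

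Concretely, take $dX_{r}=dB_{r}$ and data with $W^{K}=W=\{x<0\}$ for all $K$. Then $g^{K}(x)=x^{+}+(2K)^{-1/2}e^{-\sqrt{2K}\,x^{+}}$ is an admissible solution (indeed the Feynman--Kac one), and:
\begin{itemize}
\item $K(x-g^{K})=-\sqrt{K/2}\,e^{-\sqrt{2K}x}$ on $\{x\ge0\}$;
\item $\|(g^{K})''\|_{L^{p}(-1,1)}\to\infty$ for every $p>1$;
\item $g^{K}\to x^{+}\notin W^{2,1}_{p,loc}$.
\end{itemize}
The paper's mean-variance example behaves the same way. There $g^{K}(x^{*}_{K})-x^{*}_{K}=x^{*}_{K}\big(\tfrac{K}{K-\mu}\tfrac{\kappa-1}{\kappa-\theta}-1\big)\approx(1-\theta)x^{*}/|\kappa|$ with $|\kappa|\sim\sqrt{2K}/\sigma$. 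The limit has $g'(x^{*}-)=1-2\mu/\sigma^{2}\neq1=g'(x^{*}+)$. Likewise, $f(\cdot,t,s)$ in the non-exponential example has a kink at $x^{*}$ whenever $t\neq s$.

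So no local $W^{2,1}_{p}$ bound, and no weak $W^{2,1}_{p,loc}$ convergence across $\partial W$, is available. The paper's own Step 2 fails at exactly this point. It asserts that $w^{K}_{g}$ vanishes on $\partial(W^{K})^{c}$, but $U^{K}_{g}=x-g^{K}\neq0$ there. The extremum can therefore sit on the boundary, where no derivative conditions hold. You were right to distrust this step.

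\noindent There is a second gap, in your Step 3. You use $u^{K}\to1$ on the interior of $W^{c}$, but this does not follow from pointwise convergence of $V^{K}$. At points where $V$ equals the obstacle $F(x,t,x,t)+G(x,x,t)$, the $V^{K}$ may stay strictly above it. In the composite setting, by Theorem~\ref{conv_1}, this set is all of $W^{c}$.

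For example, take $V^{K}=F(x,t,x,t)+G(x,x,t)+1/K$. This gives $W^{K}=Q$ and $W=\emptyset$, and $g^{K}\equiv0$ is an admissible solution. Its limit violates $g=x$ on $W^{c}=Q$, even though \eqref{addition_cond_V} holds trivially with $\tau^{V}=t$. An additional hypothesis such as $1_{W^{K}}\to1_{W}$ a.e.\ is needed, and the paper's Step 3 uses it tacitly.

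A correct version must weaken the conclusion. Given locally uniform convergence of $V^{K}$ and $1_{W^{K}}\to1_{W}$ a.e., one can aim for $W^{2,1}_{p}$ convergence on compact subsets of $W$ and of the interior of $W^{c}$. Continuity across $\partial W$ would need a separate barrier argument. Your Step 4 then survives if It\^{o}'s formula is localized inside $W$.
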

\begin{proof}
The procedure of the proof is similar to that of Theorem \ref{conv_1}, which is split into four steps.

{\bf Step 1: the local uniform polynomial bound of $(f^{K})^{z,s}$ and $g^{K}$:}

Define $u^{K}(x,t):=1_{\{(x,t)\notin W^{K}\}}$ and $u^{K}_{r}=u^{K}(X_{r},r)$. Define $\tau_{n}:=\inf\{r\ge t||X_{r}|>n\}$. Fix $T>T_{0}$ sufficiently large. Applying It\^{o}-Krylov formula for $R^{u^{K}}_{r}(f^{K})^{z,s}(X_{r},r)$ and $R^{u^{K}}_{r}g^{K}(X_{r},r)$, we have 
\begin{align*}
&(f^{K})^{z,s}(x,t)=\mathbb{E}_{x,t}\bigg[\int_{t}^{T\wedge\tau_{n}}Ku^{K}_{r}e^{-K\int_{t}^{r}u^{K}_{r'}dr'}F(X_{r},r,z,s)dr+e^{-K\int_{t}^{T\wedge\tau_{n}}u^{K}_{r'}dr'}(f^{K})^{z,s}(X_{T\wedge\tau_{n}},T\wedge\tau_{n})\bigg],\\
&g^{K}(x,t)=\mathbb{E}_{x,t}\bigg[\int_{t}^{T\wedge\tau_{n}}Ku^{K}_{r}e^{-K\int_{t}^{r}u^{K}_{r'}dr'}X_{r}dr+e^{-K\int_{t}^{T\wedge\tau_{n}}u^{K}_{r'}dr'}g^{K}(X_{T\wedge\tau_{n}},T\wedge\tau_{n})\bigg].
\end{align*}
Under Assumption \ref{ass_basic}, it holds that $\tau_{n}\rightarrow\infty$ almost surely. Thanks to the polynomial growth of $F$, and $(f^{K})^{z,s}(\cdot,T)$ for $T>T^{z,s}_{0}$, and $g^{K}(\cdot,T)$ for $T>T_{0}$, we can take $n\rightarrow\infty$ and use the dominated convergence theorem to obtain 
\begin{align*}
&(f^{K})^{z,s}(x,t)=\mathbb{E}_{x,t}\bigg[\int_{t}^{T}Ku^{K}_{r}e^{-K\int_{t}^{r}u^{K}_{r'}dr'}F(X_{r},r,z,s)dr+e^{-K\int_{t}^{T}u^{K}_{r'}dr'}(f^{K})^{z,s}(X_{T},T)\bigg],\forall T>T^{z,s}_{0},\\
&g^{K}(x,t)=\mathbb{E}_{x,t}\bigg[\int_{t}^{T}Ku^{K}_{r}e^{-K\int_{t}^{r}u^{K}_{r'}dr'}X_{r}dr+e^{-K\int_{t}^{T}u^{K}_{r'}dr'}g^{K}(X_{T},T)\bigg],\forall T>T_{0}.
\end{align*}

From the above two formulas, we can derive the following uniform polynomial bounds of $(f^{K})^{z,s}$ and $g^{K}$:
\begin{equation}
\label{bound_fgK}
\begin{aligned}
&|(f^{K})^{z,s}(x,t)|\le\mathbb{E}_{x,t}\Big[\sup\limits_{r\in [t,T]}|F(X_{r},r,z,s)|+|(f^{K})^{z,s}(X_{T},T)|\Big]\le \tilde{C}_{z,s}\Big[\mathbb{E}_{x,t}\sup\limits_{r\in[t,T]}|X_{r}|^{M}+1\Big]\\
&:=\Psi_{f}^{z,s,(T)}(x,t),\forall T>T^{z,s}_{0},\forall(x,t)\in\mathbb{R}\times[0,T],\\
&|g^{K}(x,t)|\le\mathbb{E}_{x,t}\Big[\sup\limits_{r\in [t,T]}|X_{r}|+|g^{K}(X_{T},T)|\Big]\le \tilde{C}\Big[\mathbb{E}_{x,t}\sup\limits_{r\in[t,T]}|X_{r}|^{M}+1\Big]\\
&:=\Psi^{(T)}_{g}(x,t),\forall T>T_{0},\forall(x,t)\in\mathbb{R}\times[0,T],
\end{aligned}
\end{equation}
where $M$, $\tilde{C}_{z,s}$ and $\tilde{C}$ are constants independent of $K$. The functions $\Psi_{f}^{z,s,(T)}(x,t)$ and $\Psi^{(T)}_{g}(x,t)$ are of polynomial growth in $x$ and dependent of $T$ but independent of $K$.

{\bf Step 2: the uniform polynomial bounds of penalty terms:}

Then we show the boundedness of $K(U^{K}_{f})^{z,s}$ and $KU^{K}_{g}$ where
\begin{align*}
&(U^{K}_{f})^{z,s}(x,t):=(F(x,t,z,s)-(f^{K})^{z,s}(x,t))1_{(x,t)\notin W^{K}},\\
&U^{K}_{g}(x,t):=(x-g^{K}(x,t))1_{(x,t)\notin W^{K}}.
\end{align*}
From the equations of $(f^{K})^{z,s}$ and $g^{K}$, we deduce that
\begin{align*}
&\mathcal{A}(U^{K}_{f})^{z,s}(x,t)=\Big[\mathcal{A}F(x,t,z,s)+K(U^{K}_{f})^{z,s}(x,t)\Big]1_{(x,t)\notin W^{K}},\\
&\mathcal{A}U^{K}_{g}(x,t)=\Big[\mathcal{A}I^{x}(x,t)+KU^{K}_{g}(x,t)\Big]1_{(x,t)\notin W^{K}},
\end{align*}
where $I^{x}(x,t)$ denotes the function $(x,t)\mapsto x$.

Define a function $\phi(x)=(x^{2}+1)^{q}$ with $q$ sufficiently large to dominate the polynomial growth of $F(x,t,z,s)-(f^{K})^{z,s}(x,t)$, $x-g^{K}(x,t)$, $\mathcal{A}F(x,t,z,s)$ and $\mathcal{A}I^{x}(x,t)$. By the linear growth of $\mu$ and $\sigma$, there exists $C'>0$ such that $\mathcal{A}\phi(x)\le C'\phi(x)$ for some constant $C'>0$. 

Now let us define and consider
\begin{equation*}
(w^{K}_{f})^{z,s}:=\frac{(U^{K}_{f})^{z,s}}{\phi},\quad w^{K}_{g}:=\frac{U^{K}_{g}}{\phi}.
\end{equation*}
It holds for $\varphi=(w^{K}_{f})^{z,s},w^{K}_{g}$ that $\varphi=0$ on $W^{K}$ and $\partial (W^{K})^{c}$, $\limsup\limits_{t\rightarrow+\infty}|\frac{(U^{K}_{f})^{z,s}(x,t)}{\phi(x)}|$ and $\limsup\limits_{t\rightarrow+\infty}|\frac{U^{K}_{g}(x,t)}{\phi(x)}|$ are finite, and that $\varphi(x,t)\rightarrow 0$ as $x\rightarrow\infty$. Therefore, to show the boundedness of $(w^{K}_{f})^{z,s}$ and $w^{K}_{g}$ on $(W^{K})^{c}$, we only need to show the boundedness in the interior. Suppose on $(W^{K})^{c}$ that $(x^{g,+}_{K},t^{g,+}_{K})$ is a maximum of $w^{K}_{g}$ while $(x^{g,-}_{K},t^{g,-}_{K})$ is a minimum, then
\begin{align*}
&(w^{K}_{g})_{t}(x^{g,+}_{K},t^{g,+}_{K})\le 0, (w^{K}_{g})_{x}(x^{g,+}_{K},t^{g,+}_{K})= 0, (w^{K}_{g})_{xx}(x^{g,+}_{K},t^{g,+}_{K})\le 0,\\
&(w^{K}_{g})_{t}(x^{g,-}_{K},t^{g,-}_{K})\ge 0, (w^{K}_{g})_{x}(x^{g,-}_{K},t^{g,-}_{K})= 0, (w^{K}_{g})_{xx}(x^{g,-}_{K},t^{g,-}_{K})\ge 0,
\end{align*}
which leads to
\begin{align*}
&0\ge \mathcal{A}w^{K}_{g}(x^{g,+}_{K},t^{g,+}_{K})=\Big[\frac{\mathcal{A}U^{K}_{g}}{\phi}-\frac{U^{K}_{g}\mathcal{A}\phi}{\phi^{2}}\Big]\Big|_{(x^{g,+}_{K},t^{g,+}_{K})},\\
&0\le \mathcal{A}w^{K}_{g}(x^{g,-}_{K},t^{g,-}_{K})=\Big[\frac{\mathcal{A}U^{K}_{g}}{\phi}-\frac{U^{K}_{g}\mathcal{A}\phi}{\phi^{2}}\Big]\Big|_{(x^{g,-}_{K},t^{g,-}_{K})}.
\end{align*}
Noting that $U^{K}_{g}(x^{g,+}_{K},t^{g,+}_{K})\ge 0$ while $U^{K}_{g}(x^{g,-}_{K},t^{g,-}_{K})\le 0$, we then have
\begin{align*}
&\mathcal{A}I^{x}(x^{g,+}_{K},t^{g,+}_{K})+KU^{K}_{g}(x^{g,+}_{K},t^{g,+}_{K})=\mathcal{A}U^{K}_{g}(x^{g,+}_{K},t^{g,+}_{K})\le U^{K}_{g}(x^{g,+}_{K},t^{g,+}_{K})\frac{\mathcal{A}\phi(x^{g,+}_{K})}{\phi(x^{g,+}_{K})}\le C'U^{K}_{g}(x^{g,+}_{K},t^{g,+}_{K}),\\
&\mathcal{A}I^{x}(x^{g,-}_{K},t^{g,-}_{K})+KU^{K}_{g}(x^{g,-}_{K},t^{g,-}_{K})=\mathcal{A}U^{K}_{g}(x^{g,-}_{K},t^{g,-}_{K})\ge U^{K}_{g}(x^{g,-}_{K},t^{g,-}_{K})\frac{\mathcal{A}\phi(x^{g,-}_{K})}{\phi(x^{g,-}_{K})}\ge C'U^{K}_{g}(x^{g,-}_{K},t^{g,-}_{K}).
\end{align*}
Then, for $K>C'$,
\begin{align*}
&(K-C')w^{K}_{g}(x^{g,+}_{K},t^{g,+}_{K})\le \frac{-\mathcal{A}I^{x}(x^{g,+}_{K},t^{g,+}_{K})}{\phi(x^{g,+}_{K})}\le \bar{C},\\
&(K-C')w^{K}_{g}(x^{g,-}_{K},t^{g,-}_{K})\ge \frac{-\mathcal{A}I^{x}(x^{g,-}_{K},t^{g,-}_{K})}{\phi(x^{g,-}_{K})}\ge -\bar{C}
\end{align*}
for some constant $\bar{C}>$. Then, we have
\begin{equation*}
|KU^{K}_{g}(x,t)|\le \bar{C}\phi(x),\forall (x,t)\in Q.
\end{equation*}
In a similar fashion, we can deduce that there exists some $\bar{C}_{z,s}>0$ such that
\begin{equation*}
|K(U^{K}_{f})^{z,s}(x,t)|\le \bar{C}_{z,s}\phi(x),\forall (x,t)\in Q.
\end{equation*}

{\bf Step 3: subsequence convergence and limit equations:}

Similar to the proof of Theorem \ref{conv_1}, using the uniform boundedness in previous two steps, one can show that $((f^{K})^{z,s}, g^{K})$ has a subsequence $((f^{K_{n}})^{z,s}, g^{K_{n}})$ converging weakly in each $W^{2,1}_{p}(\mathcal{S}_{m})$ to some $(f^{z,s}, g)\in W^{2,1}_{p,loc}(Q) \times W^{2,1}_{p,loc}(Q)$.

For any $(x,t)\in W$, it holds that $(x,t)\in W^{K_{n
}}$ for $K_{n}$ sufficiently large. Taking $K_{n}\rightarrow\infty$, we obtain that $\mathcal{A}f^{z,s}(x,t)=\mathcal{A}g(x,t)=0$ a.e. for $(x,t)\in W$. For $(x,t)\in W^{c}$, by the boundedness of $K(U^{K_{n}}_{f})^{z,s}$ and $KU^{K_{n}}_{g}$ and taking $K_{n}\rightarrow+\infty$, we obtain that $f^{z,s}(x,t)=F(x,t,z,s)$ and $g(x,t)=x$ a.e. for $(x,t)\in W^{c}$. Therefore, $f^{z,s}$ and $g$ satisfy (\ref{limit_fg}) a.e. in $Q$.

{\bf Step 4: convergence of whole sequence}

Finally, we show that any converging subsequences of $(f^{K})^{z,s}$ and $g^{K}$ converge to the same limits.

Let $(f^K_{n})^{z,s}$ and $g^{K_{n}}$ be arbitrary converging subsequences to some $f^{z,s}$ and $g$. Then the proof of last step implies that $f^{z,s}$ and $g$ are $W^{2,1}_{p}(Q)$ solution to (\ref{limit_fg}). 

Using the It\^{o}-Krylov formula and the localization argument, we have that, for $T>T_{0}$,
\begin{align*}
&\mathbb{E}_{x,t}f^{z,s}(X_{T\wedge\tau^{V}},T\wedge\tau^{V})-f^{z,s}(x,t)=\mathbb{E}_{x,t}\int_{t}^{T\wedge\tau^{V}}\mathcal{A}f^{z,s}(X_{r},r)dr,\\
&\mathbb{E}_{x,t}g(X_{T\wedge\tau^{V}},T\wedge\tau^{V})-g(x,t)=\mathbb{E}_{x,t}\int_{t}^{T\wedge\tau^{V}}\mathcal{A}g(X_{r},r)dr,
\end{align*}
which yield
\begin{align*}
&f^{z,s}(x,t)=\mathbb{E}_{x,t}\Big[F(X_{T},T,z,s)1_{\tau^{V}\le T}+f^{z,s}(X_{T},T)1_{\tau^{V}>T}\Big],\\
&g(x,t)=\mathbb{E}_{x,t}\Big[X_{T}1_{\tau^{V}\le T}+g(X_{T},T)1_{\tau^{V}>T}\Big].
\end{align*}
By the polynomial growth of $F$ and the fact that $f^{z,s}(x,T)\le C_{z,s}(|x|^{M}+1)$ for $T>T_{0}^{z,s}$ and $g(x,T)\le C(|x|^{M}+1)$ for $T>T_{0}$, we take $T\rightarrow\infty$ and apply the dominated convergence to obtain that
\begin{equation}
\label{present_fg}
\begin{aligned}
&f^{z,s}(x,t)=\mathbb{E}_{x,t}F(X_{\tau^{V}},\tau^{V},z,s),\\
&g(x,t)=\mathbb{E}_{x,t}X_{\tau^{V}}.
\end{aligned}
\end{equation}

Then any converging subsequence of $((f^{K})^{z,s}, g^{K})$ converges to the same $(f^{z,s},g)$ uniquely given by (\ref{present_fg}), which completes the proof.
\end{proof}

\begin{remark}
Note that (\ref{limit_fg}) is sightly different from that in (\ref{vi}) as $W^{c}=\{(x,t)|F(x,t,x,t)+G(x,x,t)-V(x,t)\ge 0\}$. Putting (\ref{limit_fg}) and (\ref{limit_V}) together, we get $W^{c}=\{(x,t)|F(x,t,x,t)+G(x,x,t)-V(x,t)=0\}$ and the equations coincide with (\ref{vi}).
\end{remark}

\subsection{Convergence of the composite problem}
\label{con_tog}

This subsection establishes the convergence of $(V^{K},f^{K},g^{K})$ as the solution to the penalized problems. We impose the additional assumption in this subsection:
\begin{assumption}
\label{ass_exist}
Assume that for any $K$ sufficiently large, the PDE problem (\ref{pen}) has a solution $(V^{K},\{(f^{K})^{z,s}\}_{(z,s)\in Q},g^{K})$ with $V^{K},(f^{K})^{z,s},g^{K}\in W^{2,1}_{p,loc}(Q)\bigcap C(Q)$.
\end{assumption}

Sequentially applying Theorem \ref{conv_1} and Theorem \ref{conv_2} leads to our first main convergence theorem as below.
\begin{theorem}
\label{main_conv1}
Under Assumptions \ref{ass_basic} and \ref{ass_exist}, if two conditions hold that:\\
(a) $(f^{K})^{z,s}$ and $g^{K}$ converge to some functions $f^{z,s}$ and $g$ weakly in $W^{2,1}_{p,loc}(Q)$.\\
(b) There exist $C$ and $M$ independent of $t,s,K$ such that, for $\varphi=(f^{K})^{z,s}_{s},(f^{K})^{z,s}_{z},(f^{K})^{z,s}_{xz},(f^{K})^{z,s}_{zz}$ it holds that $|\varphi(x,t,z,s)|\le C(|x|^{M}+|z^{M}|+1)$; and for $\varphi=g^{K},g^{K}_{t},g^{K}_{x},g^{K}_{xx}$, it holds that $|\varphi(x,t)|\le C(|x|^{M}+1)$.\\
Then $(V^{K},(f^{K})^{z,s},g^{K})$ has a subsequence $(V^{K_{n}},(f^{K_{n}})^{z,s},g^{K_{n}})$ that converges in $W^{2,1}_{p,loc}(Q)$ to some $(V,f^{z,s},g)$ that solves (\ref{vi}) a.e..\\
Moreover, if\\
(c) For $\tau^{K}:=\inf\{r\ge t|F(X_{r},r,X_{r},r)+G(X_{r},X_{r},r)-V^{K}(X_{r},r)\ge 0\}$, the conditions (\ref{addition_cond}) holds.\\
Then $(V^{K},(f^{K})^{z,s},g^{K})$ converges in $W^{2,1}_{p,loc}(Q)$ to some $(V,f^{z,s},g)$.
\end{theorem}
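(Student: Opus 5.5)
The plan is to apply Theorem \ref{conv_1} first, with the given $(\{(f^{K})^{z,s}\},g^{K})$, and then Theorem \ref{conv_2}, with the $V^{K}$ produced by the first step.

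\emph{Step 1: hypotheses of Theorem \ref{conv_1}.} By Assumption \ref{ass_exist}, for $K$ large the triple $(V^{K},(f^{K})^{z,s},g^{K})$ lies in $W^{2,1}_{p,loc}(Q)\cap C(Q)$ and solves (\ref{pen}). In particular $V^{K}$ solves the first-subproblem equation. Conditions (a), (b), (c) of Theorem \ref{conv_1} are exactly the regularity, hypothesis (a) and hypothesis (b) of the present statement. The remaining assumption is the growth bound $|V^{K}(x,T)|\le C(|x|^{M}+1)$ for $T\ge T_{0}$. I would obtain it from the penalty structure. The first equation gives $V^{K}\ge S_{2}-O(1/K)$ in a suitable sense, and on $W^{K}$ the equation is linear with source $S^{K}_{1}$, which has uniform polynomial growth by (b). If this is not automatic, I would simply carry it as implicit in Assumption \ref{ass_exist}, as the statement tacitly does.

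\emph{Step 2: the first limit.} Theorem \ref{conv_1} then yields a subsequence $V^{K_{n}}$ converging weakly in $W^{2,1}_{p,loc}(Q)$ to some $V$ satisfying (\ref{limit_V}). Since $p$ is large, the compact embedding into $C^{1+\alpha,\frac{1+\alpha}{2}}$ gives locally uniform convergence of $V^{K_{n}}$, and in particular pointwise convergence. This is hypothesis (a) of Theorem \ref{conv_2} along the subsequence.

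\emph{Step 3: the second limit.} The second and third equations of (\ref{pen}) are precisely those of Theorem \ref{conv_2}, with $W^{K}$ defined through $V^{K}$. The terminal-time growth bounds for $f^{K}$ and $g^{K}$ follow from (b), for $g^{K}$ directly; for $f^{K}$ I would combine (b) with the $z$-derivative bound and integrate in $z$. Theorem \ref{conv_2} gives a further subsequence along which $(f^{K_{n}})^{z,s},g^{K_{n}}$ converge to limits satisfying (\ref{limit_fg}) with $W=\{S_{2}-V<0\}$. By hypothesis (a) these limits must equal the given $f^{z,s},g$. Hence $V$ satisfies (\ref{limit_V}) with exactly these $f,g$. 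Combining (\ref{limit_V}) and (\ref{limit_fg}) as in the remark after Theorem \ref{conv_2}, we get $W^{c}=\{S_{2}=V\}$, and the triple solves (\ref{vi}) a.e.

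\emph{Step 4: full convergence.} Under (c), condition (\ref{addition_cond}) holds, so Theorem \ref{conv_1} upgrades to convergence of the whole sequence $V^{K}\to V$. Then Theorem \ref{conv_2}(a) holds for the full sequence. For the $f,g$ part, full convergence is already given by hypothesis (a), so nothing more is needed.

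\emph{Main obstacle.} The delicate point is the mismatch between the two limiting objects. Theorem \ref{conv_1} uses the \emph{given} limits $f,g$ inside $S_{1}$, whereas Theorem \ref{conv_2} produces limits defined through $W$. Identifying them relies on uniqueness of weak limits, which is available only because hypothesis (a) asserts convergence of the whole sequence.

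A second, technical difficulty is the terminal growth of $V^{K}$ at large $T$. If it cannot be derived from (b), I would argue via the probabilistic representation of Step 1 of Theorem \ref{conv_1} on a finite horizon.
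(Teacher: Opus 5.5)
Your route is the paper's: its whole proof of Theorem \ref{main_conv1} is the sentence ``sequentially applying Theorem \ref{conv_1} and Theorem \ref{conv_2}''. Your Steps 2--4 unpack that sentence correctly: the compact embedding gives pointwise convergence of $V^{K_n}$, Theorem \ref{conv_2} is applied along that subsequence, its limits are identified with the given $f^{z,s},g$ via hypothesis (a), and full convergence follows from (\ref{addition_cond}) together with (a).

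One sub-claim fails, however. Theorem \ref{conv_2} needs the $K$-uniform terminal bound $|(f^{K})^{z,s}(x,T)|\le C^{z,s}(|x|^{M}+1)$ for $T\ge T^{z,s}_{0}$. This does not follow from (b) by integrating in $z$. Hypothesis (b) controls only $(f^{K})_{s},(f^{K})_{z},(f^{K})_{xz},(f^{K})_{zz}$. Integrating in $z$ (or in $s$) therefore bounds $(f^{K})^{z,s}-(f^{K})^{z_{0},s}$, but never $(f^{K})^{z_{0},s}$ itself.

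The same problem affects the $K$-uniform terminal bound on $V^{K}$ needed by Theorem \ref{conv_1}. It is not a consequence of (a), (b) and the equation; on an infinite horizon it is precisely what selects the solution. Your fallback through the finite-horizon representation in Step 1 of Theorem \ref{conv_1} is circular, because that representation is obtained by dominated convergence using exactly this bound.

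Both bounds are additional standing hypotheses that the statement leaves tacit and that the paper's one-line proof silently uses. State them as assumptions rather than claiming to derive them; the argument is then complete relative to Theorems \ref{conv_1} and \ref{conv_2}.
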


Sequentially applying Theorem \ref{conv_2} and Theorem \ref{conv_1} leads to the following second main convergence theorem. 
\begin{theorem}
\label{main_conv2}
Under Assumptions \ref{ass_basic} and \ref{ass_exist}, if the following conditions hold that:\\
(a) $V^{K}$ converges pointwise to some function $V$.\\
(b) The same polynomial growth conditions as in (b) of Theorem \ref{main_conv1}.\\
Then $(V^{K},(f^{K})^{z,s},g^{K})$ has a subsequence $(V^{K_{n}},(f^{K_{n}})^{z,s},g^{K_{n}})$ that converges in $W^{2,1}_{p,loc}(Q)$ to some $(V,f^{z,s},g)$ that solves (\ref{vi}) a.e..\\
Moreover, if\\
(c) For $\tau^{V}:=\inf\{r\ge t|F(X_{r},r,X_{r},r)+G(X_{r},X_{r},r)-V(X_{r},r)\ge 0\}$, the conditions (\ref{addition_cond_V}) holds.\\
Then $(V^{K},(f^{K})^{z,s},g^{K})$ converges in $W^{2,1}_{p,loc}(Q)$ to some $(V,f^{z,s},g)$.
\end{theorem}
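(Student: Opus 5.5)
The plan is to apply Theorem \ref{conv_2} first, with the pointwise limit $V$ of hypothesis (a), and then Theorem \ref{conv_1}, with the limits produced by the first step.

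\emph{Step 1 (apply Theorem \ref{conv_2}).} By Assumption \ref{ass_exist}, the functions $(f^{K})^{z,s}$ and $g^{K}$ are $W^{2,1}_{p,loc}(Q)\cap C(Q)$ solutions of the last two equations of (\ref{pen}), with $W^{K}$ determined by $V^{K}$. Hypothesis (a) is exactly condition (a) of Theorem \ref{conv_2}. The growth conditions at large $T$ are also available:
\begin{itemize}
\item for $g^{K}$, they follow directly from (b);
\item for $(f^{K})^{z,s}$, I would derive them from the integral representation in Step 1 of the proof of Theorem \ref{conv_2}, using the polynomial growth of $F$ and $\int Ku e^{-K\int u}\le 1$.
\end{itemize}
If the latter derivation is circular, I would instead read these bounds as part of (b) through the bounds on $(f^K)_z,(f^K)_s$ integrated from a base point. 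Theorem \ref{conv_2} then gives a subsequence $K_n$ along which $((f^{K_n})^{z,s},g^{K_n})\rightharpoonup(f^{z,s},g)$ weakly in $W^{2,1}_{p,loc}$, solving (\ref{limit_fg}) with $W$ defined from $V$.

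\emph{Step 2 (apply Theorem \ref{conv_1} along $K_n$).} The data $((f^{K_n})^{z,s},g^{K_n})$ now satisfy (a), (b), (c) of Theorem \ref{conv_1}:
\begin{itemize}
\item (a) holds by Assumption \ref{ass_exist};
\item (b) holds by Step 1;
\item (c) is hypothesis (b).
\end{itemize}
The bound $|V^{K}(x,T)|\le C(|x|^M+1)$ for $T\ge T_0$ should come from the identity $V^K=f^K(x,t,x,t)+G(g^K,x,t)$, obtained as in Step 2 of the verification theorem, combined with the growth of $f^K,g^K,G$. Theorem \ref{conv_1} then yields a further subsequence $V^{K_{n'}}\rightharpoonup\tilde V$ weakly in $W^{2,1}_{p,loc}$, and $\tilde V$ solves (\ref{limit_V}). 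Weak $W^{2,1}_{p,loc}$ convergence with $p$ large embeds compactly into $C^{1+\alpha,\frac{1+\alpha}{2}}$ on compacts, so $V^{K_{n'}}\to\tilde V$ locally uniformly. Since $V^K\to V$ pointwise, $\tilde V=V$. Consequently the set $W$ used in Step 1 coincides with $\{S_2-\tilde V<0\}$, and (\ref{limit_V}) gives $S_2\le V$. Combining this with (\ref{limit_fg}), as in the Remark following Theorem \ref{conv_2}, shows that $(V,f^{z,s},g)$ solves (\ref{vi}) a.e. Strong convergence in $W^{2,1}_{p,loc}$, if it is meant literally, is the delicate point. I would first settle for the weak convergence provided by the theorems, together with local uniform convergence of the functions and first derivatives.

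\emph{Step 3 (full convergence under (c)).} Condition (\ref{addition_cond_V}) is exactly the extra hypothesis of Theorem \ref{conv_2}, so $((f^K)^{z,s},g^K)$ converges along the whole sequence to the limits given by (\ref{present_fg}). Since $V^K\to V$ pointwise by (a), every $W^{2,1}_{p,loc}$-convergent subsequence of $V^K$ has limit $V$. Together with the precompactness from Steps 1--3 of Theorem \ref{conv_1}, which now applies to the whole sequence because its data converge, this gives convergence of the whole sequence $V^K$; condition (\ref{addition_cond}) is not needed.

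The main obstacle I expect is verifying the large-$T$ polynomial bounds on $V^K$ and $(f^K)^{z,s}$ that the two subtheorems assume, since (b) only controls certain derivatives. The approach I would try first is the representation $V^K=f^K(\cdot,\cdot,\cdot,\cdot)+G\diamond g^K$ on the diagonal, combined with integrating the $z,s$-derivative bounds.
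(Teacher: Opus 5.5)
Your route is the paper's. Its entire proof is the single sentence that Theorem \ref{main_conv2} follows by applying Theorem \ref{conv_2} and then Theorem \ref{conv_1}. Your unpacking of that sentence is sound:
\begin{itemize}
\item the weak limit of $V^{K_{n'}}$ is identified with the pointwise limit $V$ through the compact embedding;
\item (\ref{limit_V}) and (\ref{limit_fg}) are combined as in the Remark;
\item full convergence of $V^K$ under (c) follows from (a) plus precompactness, without (\ref{addition_cond}).
\end{itemize}
Settling for weak convergence is also all that the two subtheorems deliver.

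The step that would fail is your intended source for the large-$T$ polynomial bounds on $V^K$ and $(f^K)^{z,s}$, uniform in $K$, that Theorems \ref{conv_1} and \ref{conv_2} require. The identity $V^K=f^K(x,t,x,t)+G(g^K(x,t),x,t)$ belongs to the system (\ref{penv1})--(\ref{peng1}), not to (\ref{pen}). Insert the second and third equations of (\ref{pen}) into the first, with $u=1_{(W^K)^c}$. The difference $\Phi:=V^K-f^K(x,t,x,t)-G\diamond g^K$ then satisfies
\begin{equation*}
\mathcal{A}\Phi-Ku\Phi=Ku\big[G(g^K,x,t)-G(x,x,t)-G_y(g^K,x,t)(g^K-x)\big]=-\tfrac{Ku}{2}\,G_{yy}(\xi,x,t)\,(g^K-x)^2 .
\end{equation*}
This source does not vanish when $G_{yy}\neq0$; in the mean--variance example, $g^K\neq x$ on $[x^*_K,\infty)$. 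Even when $G$ is affine in $y$, concluding $\Phi=0$ needs the transversality condition (c) of the verification theorem. That condition is itself a large-$T$ growth statement about $V^K$, so the argument is circular. Your alternative for $f^K$, deriving its bound from the stochastic representation, is circular for the same reason, as you suspected. In any case the identity would only move the problem onto $f^K$ on the diagonal.

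Integrating the bounds on $f^K_z,f^K_s$ does not help either, because nothing controls $f^K$ at a base point. Add to every $(f^K)^{z,s}$ one $(z,s)$-independent solution $h$ of $\mathcal{A}h=K1_{(W^K)^c}h$. The result still solves (\ref{pen}), since $h$ cancels in $-\mathcal{A}f^K(x,t,x,t)+\mathcal{A}(f^K)^{x,t}(x,t)$. It also leaves $V^K$, $g^K$, $W^K$ and every quantity in (a)--(b) unchanged. Hence (a)--(b) cannot bound $f^K$ itself.

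The correct resolution is your fallback. The $K$-uniform large-$T$ bounds of Theorems \ref{conv_1} and \ref{conv_2} must be carried over as hypotheses, which is exactly what the paper's one-line proof tacitly does.
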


The conditions for Theorem \ref{main_conv2} are much easier to verify than those of Theorem \ref{main_conv1}. First, pointwise convergence is usually easier to verify than $W^{2,1}_{p}$ convergence. Second, the condition (c) in Theorem \ref{main_conv2} is imposed on a single stopping time, while the condition (c) in Theorem \ref{main_conv1} is on a family of stopping times.

Later in Section \ref{Sect_example}, we will apply Theorem \ref{main_conv2} to construct solutions to the extended HJB of time-inconsistent stopping problems.

\section{Discussions and Extensions}
\label{Sect_coro}

This section focuses on corollaries and extensions of the main results in the last section.

The following corollary links the existence of solution to the extended HJB for the stopping problem to the existence of solution to the penalized PDE.

\begin{corollary}
\label{existence_stopping}
Under Assumptions \ref{ass_basic} and \ref{ass_exist}, assume uniform polynomial growth conditions (b) of Theorem \ref{main_conv1} holds, and either one of the following boundedness holds:\\
(a) $\{(f^{K})^{z,s}\}$ and $\{g^{K}\}$ are locally bounded in $W^{2,1}_{p}(\mathcal{S})$ uniformly in $K$.\\
($\text{a}'$) $\{V^{K}\}$ is locally bounded uniformly in $K$ and equicontinuous.\\
Then the extended HJB variational inequality (\ref{vi}) for the time-inconsistent stopping problem has a $W^{2,1}_{p,loc}(Q)$ solution $(V,\{f^{z,s}\}_{(z,s)\in Q},g)$.
\end{corollary}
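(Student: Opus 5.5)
The plan is to reduce each of the two alternatives to the hypotheses of one of the two main theorems. The key point is that only the subsequence part of Theorem \ref{main_conv1} or Theorem \ref{main_conv2} is needed, and those subsequence statements hold along any subsequence. So it suffices to extract a subsequence along which hypothesis (a) of the relevant theorem holds, and then rerun the argument along that subsequence. Existence of the limit $(V,\{f^{z,s}\},g)$ solving (\ref{vi}) a.e. then follows.

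\emph{Case (a).} Fix an exhaustion of $Q$ by compact cylinders $\mathcal{S}_m\uparrow Q$. The uniform local $W^{2,1}_p(\mathcal{S}_m)$ bounds and reflexivity of $W^{2,1}_p$ ($1<p<\infty$) give weakly convergent subsequences on each $\mathcal{S}_m$. A diagonal argument then produces $K_n\to\infty$ with $g^{K_n}\rightharpoonup g$ weakly in $W^{2,1}_{p,loc}(Q)$.

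For the family $\{(f^{K})^{z,s}\}_{(z,s)\in Q}$ the index set is uncountable, so I would first diagonalize over a countable dense set of $(z,s)$. I would then extend to all $(z,s)$ using the uniform polynomial bounds in (b) on $(f^K)_z,(f^K)_s,(f^K)_{xz},(f^K)_{zz}$. These give local Lipschitz dependence of $(f^K)^{z,s}$ on $(z,s)$ uniformly in $K$, so limits at neighbouring parameters are close and the whole family converges along the same subsequence. The $W^{2,1}_p$-bound in $(x,t)$ plus these parameter derivatives also control the composite $x\mapsto f^K(x,t,x,t)$ needed in $S_1^K$.

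With hypotheses (a), (b) of Theorem \ref{main_conv1} now holding along $K_n$, Theorem \ref{conv_1} gives a further subsequence with $V^{K_n}\rightharpoonup V$ solving (\ref{limit_V}). Its proof needs the terminal growth of $V^{K}(\cdot,T)$, which I would derive from the representation in Step 1 together with (b). Since $V^{K_n}\to V$ locally uniformly by the compact embedding into $C^{1+\alpha,\frac{1+\alpha}{2}}$, Theorem \ref{conv_2} applies along this subsequence. Its Step 3 identifies the limits $f^{z,s},g$ (already fixed) as solving (\ref{limit_fg}) with $W$ defined from $V$. Combining (\ref{limit_V}) and (\ref{limit_fg}) as in the remark after Theorem \ref{conv_2} gives (\ref{vi}).

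\emph{Case (a$'$).} The hypotheses that $V^K$ is locally uniformly bounded and equicontinuous let Arzel\`a--Ascoli with a diagonal argument over $\mathcal{S}_m$ give $K_n$ with $V^{K_n}\to V$ locally uniformly, hence pointwise. This is hypothesis (a) of Theorem \ref{main_conv2} along $K_n$, and (b) is assumed. Applying the subsequence part of Theorem \ref{main_conv2} then yields a further subsequence converging in $W^{2,1}_{p,loc}$ to a solution of (\ref{vi}).

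\emph{Main obstacle.} The delicate step is in case (a): passing from countably many parameters $(z,s)$ to the whole family, and making sure the composite $f^K(x,t,x,t)$ converges weakly in $W^{2,1}_{p,loc}$ so that $S_1^K\rightharpoonup S_1$. I would handle this with the uniform parameter-derivative bounds in (b). A secondary check is the terminal growth conditions on $V^K(\cdot,T)$, $f^K(\cdot,T)$, $g^K(\cdot,T)$ required by Theorems \ref{conv_1} and \ref{conv_2}. I would read these off from the polynomial growth in (b), or from the Itô representations, in the same way as in Step 1 of those proofs.
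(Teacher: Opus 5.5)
Your proposal follows the paper's own proof. Under (a) you extract a diagonal subsequence over compact cylinders $\mathcal{S}_m\uparrow Q$ and apply Theorem~\ref{main_conv1}; under (a$'$) you use Arzel\`a--Ascoli to get a locally uniformly (hence pointwise) convergent subsequence of $V^K$ and apply Theorem~\ref{main_conv2}, in each case along the extracted subsequence. Your countable-dense-set argument for the uncountable family $\{(f^K)^{z,s}\}$, via the $K$-uniform $(z,s)$-Lipschitz bounds from (b), usefully fills in what the paper calls a ``standard diagonal argument''.

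Like the paper, you should simply invoke the two main theorems as stated rather than re-verify the hypotheses of Theorems~\ref{conv_1} and~\ref{conv_2}, because some of your planned checks would not go through as written. The terminal bound on $V^K(\cdot,T)$ is an input to the Step~1 representation, not a consequence of it, and (b) says nothing about $V^K$ or about $f^K$ itself. Likewise, the bounds in (b) give only weak compactness of $S_1^K$, not the identification of its limit as $S_1$.
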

\begin{proof}
{\bf Proof under condition (a):}
Take arbitrary sequence of compact cylinders $\mathcal{S}_{m}\uparrow Q$. Using a standard diagonal argument, the condition (a) implies that there is a subsequence of $f^{K},g^{K}$ that converges in $W^{2,1}_{p,loc}$ to some functions $f,g$. Then we can apply Theorem \ref{main_conv1} to obtain the existence of solution to (\ref{vi}).

{\bf Proof under condition ($\text{a}'$):}
Use the Arzela-Ascoli theorem, there is a subsequence of $V^{K}$ converging locally uniformly, which implies pointwise convergence. Then we can apply Theorem \ref{main_conv2} to obtain the existence of a solution to (\ref{vi}).
\end{proof}

\begin{remark}
In the last theorem, if additionally the time-inconsistent stopping problem has a unique equilibrium, then $V^{K},f^{K},g^{K}$ are convergent, and the limit constitutes a solution to (\ref{vi}).
\end{remark}

The first extension is from the penalized PDE problem (\ref{pen}) to the penalized control problem (\ref{object_penal}). Using Lemma \ref{lem_relation} and similar proofs, we have the following theorem.
\begin{theorem}
Under Assumption \ref{ass_basic}, assume $G_{yy}\le 0$, and that for each $K\ge 1$, the penalized control problem (\ref{object_penal}) has an equilibrium solution $(V^{K},\{(f^{K})^{z,s}\}_{(z,s)\in Q},g^{K}\})$ with $V^{K},(f^{K})^{z,s},g^{K}\in W^{2,1}_{p,loc}(Q)\bigcap C(Q)$ satisfying (\ref{penv1})$\sim$(\ref{peng1}). Assume the same conditions (a)$\sim$(c) of Theorem \ref{main_conv1} or (a)$\sim$(c) of Theorem \ref{main_conv2}, then $(V^{K},(f^{K})^{z,s},g^{K})$ converges in $W^{2,1}_{p,loc}(Q)$ to some $(V,f^{z,s},g)$, and $(V,\{f^{z,s}\}_{(z,s)\in Q},g)$ solves (\ref{vi}) a.e..
\end{theorem}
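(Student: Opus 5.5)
The plan is to rerun the arguments of Theorems \ref{conv_1}, \ref{conv_2}, \ref{main_conv1} and \ref{main_conv2}, with the penalty obstacle $S_{2}(x,t)=F(x,t,x,t)+G(x,x,t)$ replaced by
\begin{equation*}
\tilde{S}^{K}_{2}(x,t):=F(x,t,x,t)+G(g^{K}(x,t),x,t)-G_{y}(g^{K}(x,t),x,t)(g^{K}(x,t)-x),
\end{equation*}
and the region $W^{K}$ replaced by $\tilde{W}^{K}$. We then identify the limit through Lemma \ref{lem_relation}.

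First we treat the $V^{K}$ equation (\ref{penv1}). Steps 1 and 2 of Theorem \ref{conv_1} only use that the obstacle and its image under $\mathcal{A}$ have polynomial growth uniformly in $K$. For $\tilde{S}^{K}_{2}$ this holds: expanding $\mathcal{A}\tilde{S}^{K}_{2}$ by the chain rule involves $G,G_{y},G_{yy},G_{yz},G_{s},G_{z},G_{zz}$ composed with $g^{K}$, together with $g^{K},g^{K}_{t},g^{K}_{x},g^{K}_{xx}$. All of these are polynomially bounded uniformly in $K$ by Assumption \ref{ass_basic} and condition (b) of Theorem \ref{main_conv1}. Terms with $G_{yyy}$ may appear from $\partial_{x}^{2}$ of $G_{y}(g^{K})$. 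Since $G\in C^{2,2,1}$ only, I would avoid differentiating $\tilde S^K_2$ twice in $y$. Instead, in Step 2, I would use the identity $\mathcal{A}U^{K}=\mathcal{A}\tilde{S}^{K}_{2}+S^{K}_{1}+KU^{K}$ only at interior maxima in the viscosity/Bony sense, or mollify $G$. This is the main technical obstacle.

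Granting the uniform bound $0\le K\tilde U^{K}\le \bar C'\phi$, Step 3 goes through verbatim. Along a subsequence, $g^{K_n}\to g$ locally uniformly, and hence $\tilde S^{K_n}_2\to\tilde S_2$ locally uniformly, where $\tilde S_2$ is built from $g$. Therefore $V^{K_n}\rightharpoonup V$ in $W^{2,1}_{p,loc}$ with $\max\{\mathcal{A}V+S_1,\tilde S_2-V\}=0$. For the $f,g$ equations (\ref{penf1})–(\ref{peng1}), the argument of Theorem \ref{conv_2} is unchanged with $\tilde W^{K}$ in place of $W^{K}$. The maximum-principle bounds on the penalty terms do not depend on how the region is defined. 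Passing to the limit gives $\mathcal{A}f^{z,s}=\mathcal{A}g=0$ on $\tilde W$ and $f^{z,s}=F$, $g=x$ on $\tilde W^{c}$. Here pointwise membership $(x,t)\in\tilde W\Rightarrow(x,t)\in\tilde W^{K_n}$ for large $n$ follows from the local uniform convergence of $\tilde S^{K_n}_2-V^{K_n}$.

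The limit triple therefore solves exactly the system of Lemma \ref{lem_relation}. Since $G_{yy}\le0$, that lemma yields $\tilde W=W$ and shows that $(V,\{f^{z,s}\},g)$ solves (\ref{vi}) a.e. For full-sequence convergence under (c), we follow Step 4 of Theorems \ref{conv_1}–\ref{conv_2}. The probabilistic representations (\ref{present_V}) and (\ref{present_fg}) hold for any subsequential limit, now with $\tau^{V}$ the exit time of $\tilde W=W$. Uniqueness of the limit then follows as before. Under the Theorem \ref{main_conv1} route we need the convergence of $\tilde S^{K}_2$, which uses assumption (a) on $g^K$. Under the Theorem \ref{main_conv2} route we first obtain the $g^{K_n}$ limit via the second subproblem, with pointwise convergence of $V^{K}$, and use it to feed the first.
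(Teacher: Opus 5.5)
Your plan matches what the paper indicates with ``Lemma \ref{lem_relation} and similar proofs'': rerun Theorems \ref{conv_1}--\ref{main_conv2} with obstacle $\tilde S^K_2$ and region $\tilde W^K$, then invoke Lemma \ref{lem_relation}. You also correctly see that this lemma makes $\tilde W=W$ depend on $V$ alone, so the uniqueness step of Theorem \ref{conv_2} survives. However, the step you grant is a genuine gap: the uniform bound $0\le K\tilde U^K\le\bar C'\phi$, with $\tilde U^K:=(\tilde S^K_2-V^K)^+$. Your chain-rule claim is also not correct. Write $\tilde S^K_2=F(x,t,x,t)+H(g^K,x,t)$ with $H(y,z,s)=G(y,z,s)-G_y(y,z,s)(y-z)$. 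Then $\mathcal{A}$ produces $G_{ys}$, $G_{yyy}$, $G_{yyz}$, $G_{yzz}$, each multiplied by $g^K-x$, and Assumption \ref{ass_basic} neither provides nor bounds any of them. So $\tilde S^K_2$ need not lie in $W^{2,1}_{p,loc}$, and the identity $\mathcal{A}\tilde U^K=\mathcal{A}\tilde S^K_2+S^K_1+K\tilde U^K$, which Step 2 uses at the maximum point, is unavailable. Neither of your proposed remedies closes this. A Bony/viscosity maximum principle only compensates for the limited regularity of $V^K$, not of the obstacle. Mollifying $G$ changes (\ref{penv1}), which $V^K$ solves with the true $G$. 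Mollifying only inside the obstacle leaves $\epsilon^{-1}$-size terms times $g^K-x$ at a maximum point that need not lie in $(\tilde W^K)^c$, and there you have no control of $g^K-x$.

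The fix is never to differentiate $\tilde S^K_2$.
\begin{itemize}
\item First run Steps 1--2 of Theorem \ref{conv_2} on (\ref{peng1}). This uses $V^K$ only through the region, so $|K(x-g^K)|\le\bar C\phi$ on $(\tilde W^K)^c$.
\item By Taylor's formula, $\tilde S^K_2-S_2=-\frac{1}{2}G_{yy}(\xi,x,t)(g^K-x)^2$ with $\xi$ between $x$ and $g^K$. This is $\ge 0$ by concavity. By the growth of $G_{yy}$ and $g^K$, it is at most $C(1+|x|^m)\phi^2/K^2$ on $(\tilde W^K)^c$.
\item Next run Step 2 of Theorem \ref{conv_1} for $U^K:=(S_2-V^K)^+$, with the fixed smooth obstacle $S_2$. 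On $\{S_2>V^K\}$ you have $\tilde S^K_2\ge S_2>V^K$, hence $K\tilde U^K\ge KU^K$ and $\mathcal{A}U^K=\mathcal{A}S_2+S^K_1+K\tilde U^K\ge\mathcal{A}S_2+S^K_1+KU^K$. This is all the maximum-point argument needs, and it gives $KU^K\le\bar C'\phi$.
\item Finally, on $(\tilde W^K)^c$, $K\tilde U^K\le KU^K+K(\tilde S^K_2-S_2)\le\bar C'\phi+C(1+|x|^m)\phi^2/K$. This is the uniform polynomial bound Step 3 requires.
\end{itemize}
Note that concavity is thus already used in the compactness step, not only in Lemma \ref{lem_relation}. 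Also, in the route through Theorem \ref{main_conv1}, condition (c) should be stated for the exit times of $\tilde W^K$ rather than $W^K$.
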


Another extension is from infinite-time problems to finite-time problems, i.e., the stopping time is restricted to be no greater than some terminal time $T$. The main Theorems \ref{main_conv1} and \ref{main_conv2} have similar counterparts for finite time problems, and the proofs are similar. As our explicit examples in Section \ref{Sect_example} are all infinite-time, we omit the theorems for finite-time problems.

\section{Examples}
\label{Sect_example}
To illustrate the convergence theory, we revisit the two classic time-inconsistent stopping problems in \cite{bjork2021time}. 

The standard approach is to directly obtain the equilibrium solution using a guess-and-verify approach. Instead, we first obtain the solutions to the penalized PDEs, and apply Theorem \ref{main_conv2} to show that the limit is a solution to the stopping problem.

\subsection{Non-exponential discount}
The first example is an infinite-time stopping problem with non-exponential discount.

The decision maker intends to sell an asset at some time. The price of the asset evolves as
\begin{equation}
\label{gbm}
\begin{cases}
dX_{r}=\mu X_{r}dr+\sigma X_{r}dB_{r},\\
X_{t}=x.
\end{cases}
\end{equation}
The objective is to sell the asset at the fixed cost of $S$. The value is discounted by
\begin{equation*}
\beta(r)=\beta e^{-\delta r}+(1-\beta)e^{-(\delta+\lambda)r},\ \forall r\ge 0.
\end{equation*}

The stopping objective is
\begin{equation*}
V(x):=\sup\limits_{\tau} J(x;\tau)=\sup\limits_{\tau}\mathbb{E}_{x,t}[\beta(\tau-t)(X_{\tau}-S)].
\end{equation*}

In this example,
\begin{align*}
&\mu(x,t)=\mu x,\ \sigma(x,t)=\sigma x,\\
&F(x,t,z,s)=\beta(t-s)(x-S),\ G(y,z,s)=0,
\end{align*}
and the extended HJB variational inequality is written as
\begin{equation}
\label{vi_example1}
\begin{aligned}
&\max\{\mu x V'(x)+\frac{1}{2}\sigma^{2}x^{2}V''(x)-f_{s}(x,t,t),(x-S)-V(x)\}=0,\\
&f_{t}(x,t,s)+\mu x f_{x}(x,t,s)+\frac{1}{2}\sigma^{2}x^{2}f_{xx}(x,t,s)=0,\forall x\in W,\\
&f(x,t,s)=\beta(t-s)(x-S),\forall x\notin W,
\end{aligned}
\end{equation}
where $V(x,t)$ and $f^{z,s}(x,t)$ is now simply $V(x)$ and $f^{s}(x,t):=f(x,t,s)$.

To illustrate our convergence theory, we do not directly solve (\ref{vi_example1}). Instead, we first solve the  penalized PDE problem, and then use Theorem \ref{main_conv2} to obtain a solution to (\ref{vi_example1}).

Accordingly, the penalized PDE problem of (\ref{vi_example1}) is
\begin{align*}
&\mu x (V^{K})'(x)+\frac{1}{2}\sigma^{2}x^{2}(V^{K})''(x)-f^{K}_{s}(x,t,t)+K[(x-S)-V^{K}(x)]^{+}=0,\\
&f^{K}_{t}(x,t,s)+\mu x f^{K}_{x}(x,t,s)+\frac{1}{2}\sigma^{2}x^{2}f^{K}_{xx}(x,t,s)+K[\beta(t-s)(x-S)-f^{K}(x,t,s)]1_{\{(x-S)-V^{K}(x)\ge 0\}}=0.
\end{align*}
To solve the penalized PDE problem, we conjecture that $\{x|(x-S)-V^{K}(x)\ge 0\}=[x^{*}_{K},+\infty)$ and
\begin{equation}
\label{vf_example1}
\begin{aligned}
&V^{K}(x)=\beta w^{K}(x;\delta)+(1-\beta)w^{K}(x;\delta+\lambda),\\
&f^{K}(x,t,s)=\beta e^{-\delta(t-s)}w^{K}(x;\delta)+(1-\beta)e^{-(\delta+\lambda)(t-s)}w^{K}(x;\delta+\lambda),
\end{aligned}
\end{equation}
where $w^{K}(x;\delta),\forall\delta>0$ satisfies
\begin{equation*}
\mu xw^{K}_{x}(x;\delta)+\frac{1}{2}\sigma^{2}x^{2}w^{K}_{xx}(x;\delta)-\delta w^{K}(x;\delta)+K(x-S-w^{K}(x;\delta))1_{\{x\in[x^{*}_{K},+\infty)\}}=0.
\end{equation*}
In view that $w^{K}$ is bounded at $x=0+$ and is at most linear at $x=+\infty$, we obtain that
\begin{align*}
&w^{K}(x;\delta)=\begin{cases}
A^{(K)}_{1}x^{\gamma_{1}},&x\in(0,x^{*}_{K}),\\
B^{(K)}_{1}x^{\kappa_{1}}+\frac{K}{K+\delta-\mu}x-\frac{SK}{K+\delta},&x\in(x^{*}_{K},+\infty),
\end{cases}\\
&w^{K}(x;\delta+\lambda)=\begin{cases}
A^{(K)}_{2}x^{\gamma_{2}},&x\in(0,x^{*}_{K}),\\
B^{(K)}_{2}x^{\kappa_{2}}+\frac{K}{K+\delta+\lambda-\mu}x-\frac{SK}{K+\delta+\lambda},&x\in(x^{*}_{K},+\infty),
\end{cases}
\end{align*}
for some constants $A^{(K)}_{1},A^{(K)}_{2},B^{(K)}_{1},B^{(K)}_{2}$ to be determined and 
\begin{align*}
&\gamma_{1}=\frac{\sqrt{(\mu-\frac{\sigma^{2}}{2})^{2}+2\delta\sigma^{2}}-(\mu-\frac{\sigma^{2}}{2})}{\sigma^{2}}>0,\ \gamma_{2}=\frac{\sqrt{(\mu-\frac{\sigma^{2}}{2})^{2}+2(\delta+\lambda)\sigma^{2}}-(\mu-\frac{\sigma^{2}}{2})}{\sigma^{2}}>0,\\
&\kappa_{1}=\frac{-\sqrt{(\mu-\frac{\sigma^{2}}{2})^{2}+2(\delta+K)\sigma^{2}}-(\mu-\frac{\sigma^{2}}{2})}{\sigma^{2}}<0,\ \kappa_{2}=\frac{-\sqrt{(\mu-\frac{\sigma^{2}}{2})^{2}+2(\delta+\lambda+K)\sigma^{2}}-(\mu-\frac{\sigma^{2}}{2})}{\sigma^{2}}<0.
\end{align*}
The penalized $w^{K}$ is expected to be in $W^{2}_{p,loc}\bigcap C^{1}$, and hence the constants $A^{(K)}_{1},B^{(K)}_{1},A^{(K)}_{2},B^{(K)}_{2},x^{*}_{K}$ satisfy the smooth-fit conditions:
\begin{align*}
&A^{(K)}_{1}(x^{*}_{K})^{\gamma_{1}}=B^{(K)}_{1}(x^{*}_{K})^{\kappa_{1}}+\frac{K}{K+\delta-\mu}x^{*}_{K}-\frac{SK}{K+\delta},\\
&A^{(K)}_{2}(x^{*}_{K})^{\gamma_{2}}=B^{(K)}_{2}(x^{*}_{K})^{\kappa_{2}}+\frac{K}{K+\delta+\lambda-\mu}x^{*}_{K}-\frac{SK}{K+\delta+\lambda},\\
&A^{(K)}_{1}\gamma_{1}(x^{*}_{K})^{\gamma_{1}-1}=B^{(K)}_{1}\kappa_{1}(x^{*}_{K})^{\kappa_{1}-1}+\frac{K}{K+\delta-\mu},\\
&A^{(K)}_{2}\gamma_{2}(x^{*}_{K})^{\gamma_{2}-1}=B^{(K)}_{2}\kappa_{2}(x^{*}_{K})^{\kappa_{2}-1}+\frac{K}{K+\delta+\lambda-\mu},\\
&\beta A^{(K)}_{1}(x^{*}_{K})^{\gamma_{1}}+(1-\beta)A^{(K)}_{2}(x^{*}_{K})^{\gamma_{2}}=x^{*}_{K}-S.
\end{align*}
It then follows that
\begin{align*}
&x^{*}_{K}=S\frac{\beta\frac{K}{K+\delta}\frac{\kappa_{1}}{\kappa_{1}-\gamma_{1}}+(1-\beta)\frac{K}{K+\delta+\lambda}\frac{\kappa_{2}}{\kappa_{2}-\gamma_{2}}-1}{\beta\frac{K}{K+\delta-\mu}\frac{\kappa_{1}-1}{\kappa_{1}-\gamma_{1}}+(1-\beta)\frac{K}{K+\delta+\lambda-\mu}\frac{\kappa_{2}-1}{\kappa_{2}-\gamma_{2}}-1},\\
&A^{(K)}_{1}=\frac{\frac{K}{K+\delta-\mu}(\kappa_{1}-1)x^{*}_{K}-S\frac{K}{K+\delta}\kappa_{1}}{(\kappa_{1}-\gamma_{1})(x^{*}_{K})^{\gamma_{1}}},\\
&A^{(K)}_{2}=\frac{\frac{K}{K+\delta+\lambda-\mu}(\kappa_{2}-1)x^{*}_{K}-S\frac{K}{K+\delta+\lambda}\kappa_{2}}{(\kappa_{2}-\gamma_{2})(x^{*}_{K})^{\gamma_{2}}},\\
&B^{(K)}_{1}=\frac{\frac{K}{K+\delta-\mu}(\gamma_{1}-1)x^{*}_{K}-S\frac{K}{K+\delta}\gamma_{1}}{(\kappa_{1}-\gamma_{1})(x^{*}_{K})^{\kappa_{1}}},\\
&B^{(K)}_{2}=\frac{\frac{K}{K+\delta+\lambda-\mu}(\gamma_{2}-1)x^{*}_{K}-S\frac{K}{K+\delta+\lambda}\gamma_{2}}{(\kappa_{2}-\gamma_{2})(x^{*}_{K})^{\kappa_{2}}}.
\end{align*}

\begin{proposition}
For $V^{K}$ and $(f^{K})^{s}$ given in (\ref{vf_example1}), the pointwise limits 
\begin{align*}
&V(x)=\begin{cases}
\Big[\beta(\frac{x}{x^{*}})^{\gamma_{1}}+(1-\beta)(\frac{x}{x^{*}})^{\gamma_{2}}\Big](x^{*}-S),&x\in(0,x^{*})\\
x-S,&x\in[x^{*},+\infty).
\end{cases},\\
&f(x,t,s)=\begin{cases}
\Big[\beta e^{-\delta(t-s)}(\frac{x}{x^{*}})^{\gamma_{1}}+(1-\beta)e^{-(\delta+\lambda)(t-s)}(\frac{x}{x^{*}})^{\gamma_{2}}\Big](x^{*}-S),&x\in(0,x^{*})\\
\beta(t-s)(x-S),&x\in[x^{*},+\infty),
\end{cases}
\end{align*}
with
\begin{equation*}
x^{*}=\frac{S}{1-\frac{1}{\beta\gamma_{1}+(1-\beta)\gamma_{2}}}
\end{equation*}
is a $W^{2,1}_{p}(Q)\bigcap C(Q)$ solution to (\ref{vi_example1}).
\end{proposition}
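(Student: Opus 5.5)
We plan to verify the hypotheses of Theorem \ref{main_conv2} for the explicit family (\ref{vf_example1}) and then identify the limit. First we compute the asymptotics as $K\to\infty$. Since $\kappa_{1},\kappa_{2}\sim-\sqrt{2K}/\sigma\to-\infty$, we have $\kappa_{i}/(\kappa_{i}-\gamma_{i})\to1$, $(\kappa_{i}-1)/(\kappa_{i}-\gamma_{i})\to1$, and $K/(K+c)=1-c/K+O(K^{-2})$. Both numerator and denominator of the formula for $x^{*}_{K}$ therefore vanish, so the leading order must be extracted at scale $1/|\kappa_{i}|$, which dominates the $O(1/K)$ terms. Writing $\kappa_{i}/(\kappa_{i}-\gamma_{i})=1+\gamma_{i}/\kappa_{i}+O(\kappa_{i}^{-2})$ and $(\kappa_{i}-1)/(\kappa_{i}-\gamma_{i})=1+(\gamma_{i}-1)/\kappa_{i}+O(\kappa_{i}^{-2})$, and using $\kappa_{1}/\kappa_{2}\to1$, the numerator behaves like $(\beta\gamma_{1}+(1-\beta)\gamma_{2})/|\kappa_{1}|$ and the denominator like $(\beta\gamma_{1}+(1-\beta)\gamma_{2}-1)/|\kappa_{1}|$, up to sign. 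Hence $x^{*}_{K}\to x^{*}$ as stated. We need $\beta\gamma_{1}+(1-\beta)\gamma_{2}>1$ for $x^{*}>S>0$, which is the usual standing assumption $\delta>\mu$ (giving $\gamma_{1}>1$); we will state it. Then $A^{(K)}_{i}(x^{*}_{K})^{\gamma_{i}}\to x^{*}-S$ by the same expansion, and $B^{(K)}_{i}x^{\kappa_{i}}=B^{(K)}_{i}(x^{*}_{K})^{\kappa_{i}}(x/x^{*}_{K})^{\kappa_{i}}$ with bounded prefactor $B^{(K)}_{i}(x^{*}_{K})^{\kappa_{i}}=O(1/|\kappa_{i}|)$, so it tends to $0$ for $x>x^{*}$ and stays bounded near $x^{*}$. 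This gives pointwise convergence of $w^{K}(\cdot;\delta)$ and $w^{K}(\cdot;\delta+\lambda)$, hence of $V^{K}$ and $f^{K}$, to the displayed $V$ and $f$. This verifies condition (a) of Theorem \ref{main_conv2}.

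Second, we check condition (b), the uniform polynomial growth. The $f^{K}$ are explicit linear combinations of $e^{-c(t-s)}w^{K}$, so derivatives in $s$ (and there is no $z$-dependence) are bounded by $C(1+x)$ uniformly in $K$, for $t\ge s$. The constants are uniformly bounded, and $x^{\gamma_{i}}\le C(1+x^{M})$ on $(0,x^{*}_{K})$. Here $g\equiv0$ since $G=0$. We must also check Assumption \ref{ass_exist}, i.e. that the conjectured functions really solve the penalized system. By construction the smooth-fit conditions give $W^{2}_{p,loc}\cap C^{1}$ regularity. It remains to check that $\{x:(x-S)-V^{K}(x)\ge0\}=[x^{*}_{K},\infty)$ for large $K$. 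We expect this to follow from convexity of $V^{K}$ on $(0,x^{*}_{K})$, the value $V^{K}(x^{*}_{K})=x^{*}_{K}-S$ from the fifth fit condition, and slope comparison on both sides; a perturbation argument from the limit $K=\infty$ should suffice. The state space is $(0,\infty)$ rather than $\mathbb{R}$, but the GBM coefficients are Lipschitz and linear, so the theorems apply verbatim on $(0,\infty)\times[0,\infty)$ with $\ln x$ as needed.

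Third, we apply Theorem \ref{main_conv2}. A subsequence converges in $W^{2,1}_{p,loc}$ to a solution of (\ref{vi_example1}), and since the pointwise limits are already identified, the limit is exactly $(V,f)$. Alternatively, we can check directly that $(V,f)$ satisfies (\ref{vi_example1}). On $(0,x^{*})$ the generator equation holds term by term, because $\tfrac12\sigma^{2}\gamma(\gamma-1)+\mu\gamma=\delta$ (resp. $\delta+\lambda$) and $f_{s}(x,t,t)=\delta\beta(\cdot)+(\delta+\lambda)(1-\beta)(\cdot)$. On $[x^{*},\infty)$ we need $\mu x-f_{s}(x,t,t)=\mu x-(\beta\delta+(1-\beta)(\delta+\lambda))(x-S)\le0$, which holds for $x\ge x^{*}$ given $\delta>\mu$ and the size of $x^{*}$. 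We also need $V\ge x-S$ on $(0,x^{*})$, which follows from convexity together with the $C^{1}$ fit at $x^{*}$, since the limiting $x^{*}$ satisfies $V'(x^{*})=1$.

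The main obstacle is the singular asymptotic expansion of $x^{*}_{K}$: both the numerator and denominator degenerate, and the $1/\sqrt{K}$ terms must be kept carefully against the $1/K$ terms. A second difficulty is confirming the conjectured shape of the stopping region for every large $K$.
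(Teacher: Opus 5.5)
\textbf{Agreement with the paper.} Your argument follows the paper's proof step for step. You use the same asymptotics: $\kappa_i\to-\infty$, numerator and denominator of $x^*_K$ expanded at scale $1/|\kappa_i|$, the limits of $A^{(K)}_i$, and $B^{(K)}_i(x^*_K)^{\kappa_i}=O(1/|\kappa_i|)$. You use the same growth check via $|B^{(K)}_ix^{\kappa_i}|\le|B^{(K)}_i(x^*_K)^{\kappa_i}|$ on $[x^*_K,\infty)$. You then apply Theorem \ref{main_conv2} and identify the subsequential limit through the pointwise limit. Your remarks on $\delta>\mu$ and on Assumption \ref{ass_exist} concern points the paper passes over.

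\textbf{The gap, inherited from the paper.} The $W^{2,1}_p$ claim is false for $f$. Set $\Gamma:=\beta\gamma_1+(1-\beta)\gamma_2$, so that $(x^*-S)/x^*=1/\Gamma$, and let $r=t-s$. The one-sided $x$-derivatives of $f(\cdot,t,s)$ at $x^*$ are $\Gamma^{-1}[\beta\gamma_1e^{-\delta r}+(1-\beta)\gamma_2e^{-(\delta+\lambda)r}]$ from the left and $\beta e^{-\delta r}+(1-\beta)e^{-(\delta+\lambda)r}$ from the right. They differ by
\begin{equation*}
\frac{\beta(1-\beta)(\gamma_2-\gamma_1)}{\Gamma}\big(e^{-(\delta+\lambda)r}-e^{-\delta r}\big)\neq0\qquad(r\neq0).
\end{equation*}
Smooth fit holds only on the diagonal $t=s$, where it is just $V'(x^*)=1$. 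Off the diagonal, $f^{s}$ has a kink along $x=x^*$ and lies in no $W^{2,1}_{p,loc}$, $p\ge1$. So Theorem \ref{main_conv2} cannot give $W^{2,1}_{p,loc}$ convergence of $f^K$ here.

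The step that breaks is Step 2 of Theorem \ref{conv_2}, which treats $(U^K_f)^{z,s}$ as vanishing on $\partial W^K$. The fifth fit condition only forces the $\beta$-weighted combination to vanish. At $x^*_K$ one has $F-f^K=\beta d_K\big(e^{-\delta r}-e^{-(\delta+\lambda)r}\big)$, where $d_K:=x^*_K-S-w^K(x^*_K;\delta)$ is of exact order $K^{-1/2}$. Hence $K(U^K_f)^{s}$ has size $\sqrt K$ on a layer of width $K^{-1/2}$: it is bounded in $L^1_{loc}$ but not in $L^p_{loc}$ for $p>1$.

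What is provable is exactly what your direct verification gives: $V\in W^{2}_{p,loc}\cap C^1$, $f$ continuous and smooth on each side of $x=x^*$, and (\ref{vi_example1}) holding a.e. That check, combined with your pointwise limits, should carry the proof, and the regularity claim for $f$ must be weakened.

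\textbf{The shape of the penalized stopping region.} The identity $\{x-S\ge V^K\}=[x^*_K,\infty)$ does not follow from a soft perturbation of the limit. At $K=\infty$ the contact is tangential ($V'(x^*)=1$ and $V\equiv x-S$ on the right), so every sign you need comes from a cancellation between the two exponential components. For instance,
\begin{equation*}
(V^K)'(x^*_K)-1=\frac{|\kappa_1|}{K}\cdot\frac{\mu\Gamma-\bar\delta}{\Gamma}+O(K^{-1}),\qquad \bar\delta:=\beta\delta+(1-\beta)(\delta+\lambda),
\end{equation*}
where $\mu\Gamma-\bar\delta=-\frac{\sigma^2}{2}[\beta\gamma_1(\gamma_1-1)+(1-\beta)\gamma_2(\gamma_2-1)]<0$. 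On $(x^*_K,\infty)$, convexity of $V^K$ is not automatic, since $B^{(K)}_1>0>B^{(K)}_2$.

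It does work out for large $K$ when $\delta>\mu$. First show that $V^K$ is convex on all of $(0,\infty)$. Then $V^K-(x-S)$ is convex with derivative tending to $\beta\frac{K}{K+\delta-\mu}+(1-\beta)\frac{K}{K+\delta+\lambda-\mu}-1<0$, hence strictly decreasing. This requires the second-order bookkeeping above, not a soft perturbation argument.
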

\begin{proof}
It is straightforward to verify that
\begin{align*}
&\lim\limits_{K\to\infty}\kappa_{1}=\lim\limits_{K\to\infty}\kappa_{2}=-\infty,\\ &\lim\limits_{K\to\infty}x^{*}_{K}=\lim\limits_{K\to\infty}S\frac{\beta\frac{\gamma_{1}}{\kappa_{1}}+(1-\beta)\frac{\gamma_{2}}{\kappa_{2}}+O(\frac{1}{K})}{\beta\frac{\gamma_{1}-1}{\kappa_{1}}+(1-\beta)\frac{\gamma_{2}-1}{\kappa_{2}}+O(\frac{1}{K})}=x^{*},\\
&\lim\limits_{K\to\infty}A^{(K)}_{1}=\frac{x^{*}-S}{(x^{*})^{\gamma_{1}}},\quad\lim\limits_{K\to\infty}A^{(K)}_{2}=\frac{x^{*}-S}{(x^{*})^{\gamma_{2}}},\\
&\lim\limits_{K\to\infty}B^{(K)}_{1}x^{\kappa_{1}}=\lim\limits_{K\to\infty}B^{(K)}_{2}x^{\kappa_{2}}=0,\quad \forall x\in(x^{*},+\infty).
\end{align*}
Then it follows that $V^{K}\rightarrow V$ and $f^{K}\rightarrow f$ pontwise.

In this example, the condition (b) in Theorem \ref{main_conv2} only requires that $f^{K}_{s}(x,t,s)\le C(|x|^{M}+1),\forall s<t,\forall x$ for some constants $C$ and $M$ independent of $K$. By the expression of $f^{K}$, it is sufficient to show that the corresponding growth condition holds for $w^{K}(x;\delta)$ and $w^{K}(x;\delta+\lambda)$. 

Using the limits of $A^{(K)}_{1},A^{(K)}_{2}$ and $x^{*}_{K}$, we deduce that $w^{K}(x;\delta)$ and $w^{K}(x;\delta+\lambda)$ are uniformly bounded by some constants $C$ on $[0,x^{*}_{K}]$. For $x\in[x^{*}_{K},+\infty)$, the linear parts $\frac{K}{K+\delta-\mu}x-\frac{SK}{K+\delta}$ and $\frac{K}{K+\delta+\lambda-\mu}x-\frac{SK}{K+\delta+\lambda}$ are clearly uniformly bounded by polynomials. The term $B^{(K)}_{i}x^{\kappa_{i}},i=1,2$ on $[x^{*}_{K},+\infty)$ are actually uniformly bounded, because
\begin{align*}
&|B^{(K)}_{1}x^{\kappa_{1}}|\le |B^{(K)}_{1}(x^{*}_{K})^{\kappa_{1}}|= |A^{(K)}_{1}(x^{*}_{K})^{\gamma} - \frac{K}{K+\delta-\mu}x^{*}_{K} + \frac{SK}{K+\delta}|,\\
&|B^{(K)}_{2}x^{\kappa_{2}}|\le |B^{(K)}_{2}(x^{*}_{K})^{\kappa_{2}}|= |A^{(K)}_{2}(x^{*}_{K})^{\gamma} - \frac{K}{K+\delta+\lambda-\mu}x^{*}_{K} + \frac{SK}{K+\delta+\lambda}|,
\end{align*}
where the expression on the RHS is uniformly bounded.

It follows from Theorem \ref{main_conv2} that $(V^{K},f^{K})$ has a convergent subsequence, and the limit is a solution to (\ref{vi_example1}). As the pointwise limits of $(V^{K},f^{K})$ is $(V,f)$, the conclusion holds.
\end{proof}

\begin{remark}
It is worth noting that condition (c) of Theorem \ref{main_conv2} only holds for $\mu-\frac{\sigma^{2}}{2}>0$. It shows that condition (c) is only sufficient but not necessary for the convergence of the whole sequence.
\end{remark}

\begin{figure}[htbp]
\centering
\includegraphics[width=0.95\linewidth, height=1\textheight, keepaspectratio]{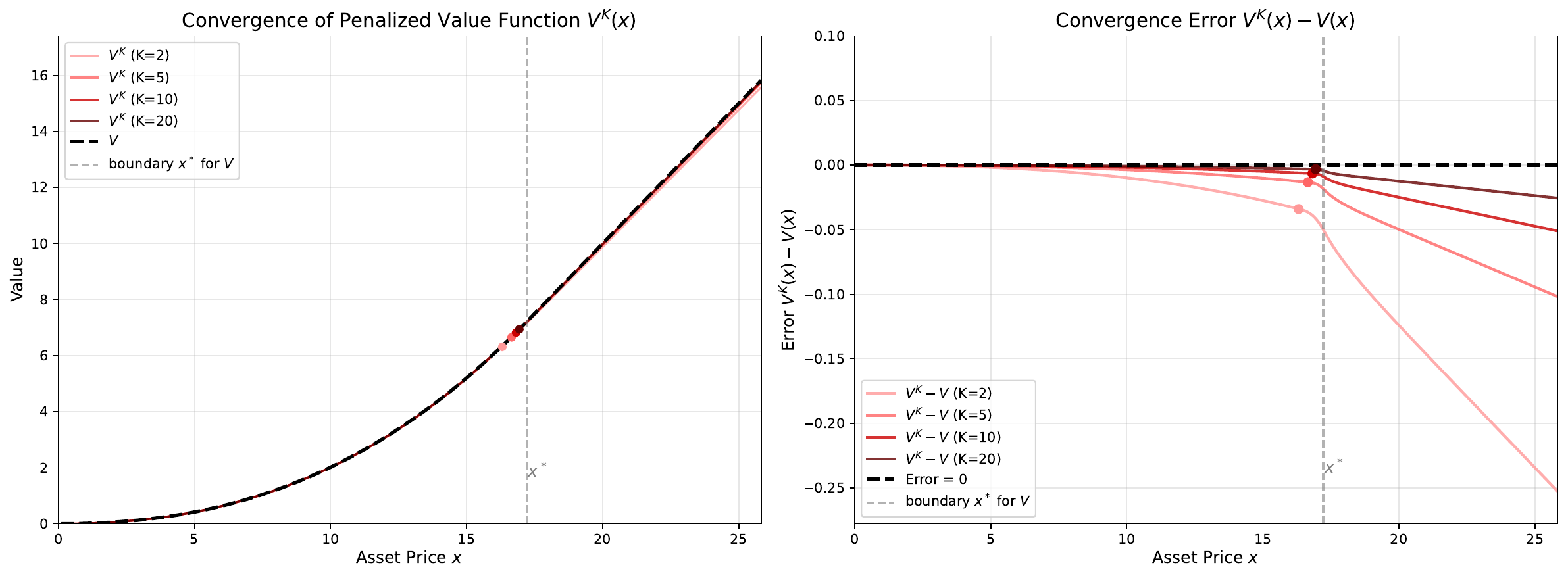}
\caption{Convergence in the non-exponential discount case. The model parameters are set as $\mu=0.02,\sigma=0.1,\delta=0.05,\lambda=0.03,\beta=0.5,S=10$.}
\label{fig1}
\end{figure}

As shown in Fig \ref{fig1}, the error will explode as $x$ tends to $+\infty$, which implies that $L^{\infty}$ convergence is not valid but the local convergence is appropriate in this case. We have the similar observation in the next mean-variance example in Fig \ref{fig2}.

\subsection{Mean-variance objective}
The second example is an 

The next example is still an asset selling problem, with the asset price following the geometric Brownian motion (\ref{gbm}). We consider an infinite-time stopping problem under a mean-variance objective, which is to maximize 
\begin{equation*}
V(x):=\sup\limits_{\tau} J(x;\tau)=\mathbb{E}_{x,t}X_{\tau}-\frac{\gamma}{2}\var_{x,t}X_{\tau}.
\end{equation*}

In this case, 
\begin{align*}
&\mu(x,t)=\mu x,\ \sigma(x,t)=\sigma x,\\
&F(x,t,z,s)=x-\frac{\gamma}{2}x^{2},\ G(y,z,s)=\frac{\gamma}{2}y^{2}.
\end{align*}
The extended HJB variational inequality is derived as
\begin{equation}
\label{vi_example2}
\begin{aligned}
&\max\{\mu xV'(x)+\frac{1}{2}\sigma^{2}x^{2}V''(x)-\frac{\gamma}{2}\sigma^{2}x^{2}[g'(x)]^{2},x-V(x)\}=0,\\
&\mu xg'(x)+\frac{1}{2}\sigma^{2}x^{2}g''(x)=0,\forall x\in W,\\
&g(x)=x,\forall x\notin W,
\end{aligned}
\end{equation}
where $V(x,t)$ and $g(x,t)$ are simply $V(x)$ and $g(x)$.

Same as before, we first consider and solve the penalized PDE problem
\begin{align*}
&\mu x(V^{K})'(x)+\frac{1}{2}\sigma^{2}x^{2}(V^{K})''(x)-\frac{\gamma}{2}\sigma^{2}x^{2}[(g^{K})'(x)]^{2}+K[x-V^{K}(x)]^{+}=0,\\
&\mu x(g^{K})'(x)+\frac{1}{2}\sigma^{2}x^{2}(g^{K})''(x)+K[x-g^{K}(x)]1_{\{x-V^{K}(x)\ge 0\}}=0.
\end{align*}
We conjecture that $\{x|x-V^{K}(x)\ge 0\}=[x^{*}_{K},+\infty)$. Using the boundedness at $x=0+$ and linear growth at $x=+\infty$, we obtain that
\begin{equation}
\label{vg_example2}
\begin{aligned}
&g^{K}(x)=\begin{cases}
A^{(K)}x^{\theta}, & x\in(0,x^{*}_{K}),\\
B^{(K)}x^{\kappa}+\frac{K}{K-\mu}x, & x\in(x^{*}_{K},+\infty),
\end{cases}\\
&V^{K}(x)=\begin{cases}
C^{(K)}x^{\theta}+\frac{\gamma}{2}[A^{(K)}]^{2}x^{2\theta}, & x\in(0,x^{*}_{K}),\\
D^{(K)}x^{\kappa} + \frac{K}{K-\mu}x + M^{(K)}_{1} x^{2\kappa} + M^{(K)}_{2} x^{\kappa+1} + M^{(K)}_{3} x^{2}, & x\in(x^{*}_{K},+\infty),
\end{cases}
\end{aligned}
\end{equation}
where
\begin{align*}
&\theta:=1-\frac{2\mu}{\sigma^{2}},\quad \kappa:=\frac{-\sqrt{(\mu-\frac{\sigma^{2}}{2})^{2}+2K\sigma^{2}}-(\mu-\frac{\sigma^{2}}{2})}{\sigma^{2}}<0,\\
&M^{(K)}_{1}:=\frac{\gamma\sigma^{2}\kappa^{2}[B^{(K)}]^{2}}{2[\kappa(\sigma^{2}-2\mu)+3K]},\ M^{(K)}_{2}:=\frac{\gamma\sigma^{2}\kappa K B^{(K)}}{(K-\mu)(\sigma^{2}\kappa+\mu)},\ M^{(K)}_{3}:=\frac{\gamma\sigma^{2}K^{2}}{2(K-\mu)^{2}(\sigma^{2}+2\mu-K)}.
\end{align*}
We note that for the boundedness at $x=0+$, it is required that $2\theta\ge 1$, i.e., $\mu\in(0,\frac{\sigma^{2}}{4}]$.

The constants $A^{(K)},B^{(K)},C^{(K)},D^{(K)}$ and $x^{*}_{K}$ satisfy the smooth-fit conditions:
\begin{align*}
&A^{(K)}(x^{*}_{K})^{\theta}=B^{(K)}(x^{*}_{K})^{\kappa}+\frac{K}{K-\mu}x^{*}_{K},\\
&A^{(K)}\theta(x^{*}_{K})^{\theta-1}=B^{(K)}\kappa(x^{*}_{K})^{\kappa-1}+\frac{K}{K-\mu},\\
&\begin{aligned}
C^{(K)}(x^{*}_{K})^{\theta}+\frac{\gamma}{2}[A^{(K)}]^{2}(x^{*}_{K})^{2\theta}=&D^{(K)}(x^{*}_{K})^{\kappa} + \frac{K}{K-\mu}x^{*}_{K} + M^{(K)}_{1} (x^{*}_{K})^{2\kappa} \\&+ M^{(K)}_{2} (x^{*}_{K})^{\kappa+1} +M^{(K)}_{3} (x^{*}_{K})^{2},
\end{aligned}\\
&\begin{aligned}
C^{(K)}\theta(x^{*}_{K})^{\theta-1}+\frac{\gamma}{2}[A^{(K)}]^{2}2\theta(x^{*}_{K})^{2\theta-1}=&D^{(K)}\kappa(x^{*}_{K})^{\kappa-1} + \frac{K}{K-\mu} + M^{(K)}_{1} 2\kappa(x^{*}_{K})^{2\kappa-1} \\&+ M^{(K)}_{2} (\kappa+1)(x^{*}_{K})^{\kappa} +M^{(K)}_{3} 2(x^{*}_{K}),
\end{aligned}\\
&x^{*}_{K}=C^{(K)}(x^{*}_{K})^{\theta}+\frac{\gamma}{2}[A^{(K)}]^{2}(x^{*}_{K})^{2\theta}.
\end{align*}
We deduce that
\begin{align*}
x^{*}_{K}=&\frac{\frac{2(K-\mu)}{\gamma K^{2}}[\mu(\kappa-\theta)-K(1-\theta)]}{\frac{\sigma^{2}\kappa^{3}}{\kappa(\sigma^{2}-2\mu)+3K}(\frac{\theta-1}{\kappa-\theta})^{2}+\frac{2\sigma^{2}\kappa}{\sigma^{2}\kappa+\mu}\frac{\theta-1}{\kappa-\theta}+(2-\kappa)\frac{\sigma^{2}}{\sigma^{2}+2\mu-K}-\theta(\frac{\kappa-1}{\kappa-\theta})^{2}},\\
A^{(K)}=&\frac{K}{K-\mu}\frac{\kappa-1}{\kappa-\theta}(x^{*}_{K})^{1-\theta},\\
B^{(K)}=&\frac{K}{K-\mu}\frac{\theta-1}{\kappa-\theta}(x^{*}_{K})^{1-\kappa},\\
C^{(K)}=&\frac{K}{K-\mu}\frac{\kappa-1}{\kappa-\theta}(x^{*}_{K})^{1-\theta}+M^{(K)}_{1}\frac{-\kappa}{\kappa-\theta}(x^{*}_{K})^{2\kappa-\theta}+M^{(K)}_{2}\frac{-1}{\kappa-\theta}(x^{*}_{K})^{\kappa-\theta+1}\\&+M^{(K)}_{3}\frac{\kappa-2}{\kappa-\theta}(x^{*}_{K})^{2-\theta}-\frac{\gamma}{2}[A^{(K)}]^{2}\frac{\kappa-2\theta}{\kappa-\theta}x^{\theta},\\
D^{(K)}=&\frac{K}{K-\mu}\frac{\theta-1}{\kappa-\theta}(x^{*}_{K})^{1-\kappa}+M^{(K)}_{1}\frac{\theta-2\kappa}{\kappa-\theta}(x^{*}_{K})^{\kappa}+M^{(K)}_{2}\frac{\theta-\kappa-1}{\kappa-\theta}x^{*}_{K}\\&+M^{(K)}_{3}\frac{\theta-2}{\kappa-\theta}(x^{*}_{K})^{2-\kappa}+\frac{\gamma}{2}[A^{(K)}]^{2}\frac{\theta}{\kappa-\theta}(x^{*}_{K})^{2\theta-\kappa}.
\end{align*}

\begin{proposition}
For $\mu\in (0,\frac{\sigma^{2}}{4}]$, and $V^{K}$ and $(f^{K})^{s}$ given in (\ref{vg_example2}), the pointwise limits 
\begin{align*}
&V(x)=\begin{cases}
[(x^{*})^{\frac{2\mu}{\sigma^{2}}}-\frac{\gamma}{2}(x^{*})^{1+\frac{2\mu}{\sigma^{2}}}]x^{1-\frac{2\mu}{\sigma^{2}}}+\frac{\gamma}{2}(x^{*})^{\frac{4\mu}{\sigma^{2}}}x^{2-\frac{4\mu}{\sigma^{2}}},&x\in(0,x^{*})\\
x,&x\in[x^{*},+\infty).
\end{cases},\\
&g(x)=\begin{cases}
(x^{*})^{\frac{2\mu}{\sigma^{2}}}x^{1-\frac{2\mu}{\sigma^{2}}},&x\in(0,x^{*})\\
x,&x\in[x^{*},+\infty),
\end{cases}
\end{align*}
with
\begin{equation*}
x^{*}=\frac{4\mu}{\gamma(\sigma^{2}-2\mu)}
\end{equation*}
is a $W^{2,1}_{p}(Q)\bigcap C(Q)$ solution to (\ref{vi_example2}).
\end{proposition}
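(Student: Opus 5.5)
The proof will mirror the non-exponential discount example. We first compute the pointwise limits of the explicit penalized solutions (\ref{vg_example2}), then verify condition (b) of Theorem \ref{main_conv2}, and finally invoke that theorem to identify the limit as a solution to (\ref{vi_example2}).

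\textbf{Step 1: limits of the constants.} As $K\to\infty$ we have $\kappa\sim-\sqrt{2K}/\sigma$, so $\kappa\to-\infty$, $\kappa^{2}/K\to 2/\sigma^{2}$, $K/(K-\mu)\to1$ and $\sigma^{2}/(\sigma^{2}+2\mu-K)\to0$. In the formula for $x^{*}_{K}$:
\begin{itemize}
\item the numerator behaves like $\frac{2}{\gamma K}[\mu\kappa-K(1-\theta)]$;
\item the denominator has terms of order $\kappa^{3}/K\sim\kappa$, $\sigma^{2}\kappa/(\sigma^{2}\kappa+\mu)\to1$, $\kappa/K\to0$ and $\theta(\kappa-1)^{2}/(\kappa-\theta)^{2}\to\theta$.
\end{itemize}
One needs to check carefully which terms dominate, keeping track of $\kappa^{2}/K$ and using $1-\theta=2\mu/\sigma^{2}$. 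The expected outcome is $x^{*}_{K}\to x^{*}=\frac{4\mu}{\gamma(\sigma^{2}-2\mu)}$. This bookkeeping of leading orders is the main obstacle. I would expand every term in powers of $K^{-1/2}$ and match orders, since the $\kappa^{3}/K$ term and the numerator $\mu\kappa/K$ are both of order $K^{-1/2}$ relative to unity, and their ratio produces $x^{*}$.

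From $x^{*}_{K}\to x^{*}$ we then obtain:
\begin{itemize}
\item $A^{(K)}\to (x^{*})^{1-\theta}=(x^{*})^{2\mu/\sigma^{2}}$;
\item $B^{(K)}x^{\kappa}\to0$ for $x>x^{*}$, since $(x/x^{*}_{K})^{\kappa}\to0$ and $|B^{(K)}(x^{*}_K)^{\kappa}|=O(|\kappa|^{-1})$;
\item $M^{(K)}_{3}\to0$, and $M^{(K)}_{1}x^{2\kappa}$, $M^{(K)}_{2}x^{\kappa+1}$, $D^{(K)}x^{\kappa}$ tend to $0$ on $(x^{*},\infty)$;
\item $C^{(K)}\to (x^{*})^{1-\theta}-\frac{\gamma}{2}(x^{*})^{2-\theta}$, which can be read off most easily from the last smooth-fit identity $x^{*}_{K}=C^{(K)}(x^{*}_{K})^{\theta}+\frac{\gamma}{2}[A^{(K)}]^{2}(x^{*}_{K})^{2\theta}$.
\end{itemize}
Substituting these limits gives pointwise convergence $V^{K}\to V$ and $g^{K}\to g$, with exactly the displayed formulas (using $\theta=1-2\mu/\sigma^{2}$).

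\textbf{Step 2: uniform growth (condition (b)).} Here $F$ and $G$ have no $s$- or $z$-dependence and there is no $f$-family beyond $F$ itself. So we only need $|g^{K}|,|(g^{K})'|,|(g^{K})''|\le C(|x|^{M}+1)$ uniformly in $K$ (the time derivative vanishes).
\begin{itemize}
\item On $(0,x^{*}_{K})$: $A^{(K)}x^{\theta}$ is bounded since $\theta\in(0,1)$ and $A^{(K)}$ converges. Its derivatives carry $x^{\theta-1},x^{\theta-2}$, which blow up at $0$. I would handle this by restricting to the state domain $(0,\infty)$ and, if needed, using local polynomial bounds in $x$ and $1/x$ on compacts, since the argument of Theorem \ref{main_conv2} is local apart from the step 1 representation, which only uses $g^{K}$ itself.
\item On $(x^{*}_{K},\infty)$: $|B^{(K)}x^{\kappa}|\le|B^{(K)}(x^{*}_{K})^{\kappa}|$, which is bounded by smooth fit. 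The derivatives are $B^{(K)}\kappa x^{\kappa-1}$ and $B^{(K)}\kappa(\kappa-1)x^{\kappa-2}$, and these are bounded because $|B^{(K)}(x^{*}_K)^{\kappa}|=O(1/|\kappa|)$ from the first two smooth-fit equations. The $\kappa^{2}$ factor in the second derivative is absorbed locally through the $L^{p}$ bound coming from the equation itself: $\frac12\sigma^{2}x^{2}(g^{K})''=-\mu x(g^{K})'-K(x-g^{K})$, and $K(x-g^{K})$ is controlled by Step 2 of Theorem \ref{conv_2}.
\end{itemize}
The terminal-growth hypotheses of Theorem \ref{conv_2} and Theorem \ref{conv_1} are trivial here because the problem is stationary and $V^{K}$, $g^{K}$ grow at most quadratically.

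\textbf{Step 3: conclusion.} Theorem \ref{main_conv2} now yields a subsequence converging in $W^{2,1}_{p,loc}$ to a solution of (\ref{vi_example2}). Since the pointwise limits are $(V,g)$, that solution is $(V,g)$. As a sanity check, I would also directly verify that at $x^{*}$ the pair satisfies $V(x^{*})=x^{*}$ and $V'(x^{*}-)=1$, and that $\mu x-\frac{\gamma}{2}\sigma^{2}x^{2}\le0$ on $[x^{*},\infty)$. The latter reduces to $x\ge 2\mu/(\gamma\sigma^{2})$, which holds because $x^{*}\ge 2\mu/(\gamma\sigma^{2})$.
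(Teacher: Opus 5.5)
Your route is the paper's: pass to the limit in the explicit constants, check the growth hypothesis, and invoke Theorem \ref{main_conv2}. There is a misstatement in Step 1 and a genuine gap in Step 2.

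\textbf{Step 1.} The mechanism you give for $x^*_K\to x^*$ is wrong. Keep the factor $\big(\frac{\theta-1}{\kappa-\theta}\big)^2\approx(1-\theta)^2/\kappa^2$. Then the first three terms of the denominator are all $O(K^{-1/2})$, and the last tends to $-\theta$. The numerator tends to $-\frac{2}{\gamma}(1-\theta)$; the $\mu\kappa/K$ piece is the negligible part. So $x^*=\frac{2(1-\theta)}{\gamma\theta}$ is a ratio of the $O(1)$ terms, not of two $O(K^{-1/2})$ terms. With this correction, the other limits you list are fine.

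\textbf{Step 2: the gap.} On $(x^*_K,\infty)$ smooth fit gives $B^{(K)}(x^*_K)^{\kappa}=\frac{K}{K-\mu}\frac{\theta-1}{\kappa-\theta}x^*_K\approx(1-\theta)x^*/|\kappa|$. Hence $x(g^K)'$ is uniformly controlled, but at $x^*_K$ we have $x^2(g^K)''=B^{(K)}\kappa(\kappa-1)x^{\kappa}\approx(1-\theta)x^*|\kappa|\sim\sqrt{K}$.

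Your appeal to Step 2 of Theorem \ref{conv_2} does not absorb this. Its maximum-principle argument needs $U^K_g=(x-g^K)1_{(W^K)^c}$ to vanish on $\partial(W^K)^c$. Here $g^K(x^*_K)\neq x^*_K$; explicitly, $K\big(x^*_K-g^K(x^*_K)\big)=-\frac{K\mu}{K-\mu}x^*_K-KB^{(K)}(x^*_K)^{\kappa}\approx-(1-\theta)x^*K/|\kappa|\to-\infty$. The $L^p$ norm of $K(x-g^K)$, and hence of $(g^K)''$, near $x^*$ grows like $K^{(p-1)/(2p)}$.

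No sharper estimate can fix this. The limit satisfies $g'(x^*-)=\theta=1-\frac{2\mu}{\sigma^2}<1=g'(x^*+)$, so $g\notin W^{2}_{p,loc}$, and $g^K$ cannot converge to it weakly in $W^{2}_{p,loc}$. Thus Theorem \ref{main_conv2} cannot be invoked for the $g$-component, and the $W^{2,1}_p$ regularity claim holds only for $V$. The paper's proof has the same defect. It asserts $|x^2(g^K)''|\le C(x^M+1)$ uniformly in $K$, which amounts to a uniform bound on $\mathcal{A}g^K=-K(x-g^K)1_{[x^*_K,\infty)}$, exactly the quantity that blows up.

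\textbf{A repair.} In this example $g^K$ enters the $V^K$-equation only through $S^K_1=-\frac{\gamma}{2}\sigma^2x^2[(g^K)']^2$, because the second-order parts of $\mathcal{H}g^K$ and $\mathcal{A}(G\diamond g^K)$ cancel. Since $x(g^K)'$ obeys a $K$-uniform polynomial bound and converges a.e., $S^K_1\to S_1$ in $L^p_{loc}$. Steps 1--3 of the proof of Theorem \ref{conv_1} then apply, because there the penalty does vanish on the free boundary. They give subsequential $W^{2}_{p,loc}$ convergence of $V^K$ to a function satisfying the first line of (\ref{vi_example2}), and the $g$-equations can be read off the explicit limit.

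Alternatively, verify (\ref{vi_example2}) directly by completing your sanity check. Use $\mathcal{A}x^{\theta}=0$ and $\mathcal{A}x^{2\theta}=\sigma^2\theta^2x^{2\theta}$ to get equality on $(0,x^*)$. Either way you also need the feedback region to be $W=(0,x^*)$, that is, $V>x$ on $(0,x^*)$. This holds for $\mu<\sigma^2/4$, but fails at the endpoint $\mu=\sigma^2/4$, where $V\equiv x$ and $W$ is empty.
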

\begin{proof}
As $K\to \infty$, it holds that $\kappa\sim-\sqrt{K}$, 
\begin{equation*}
\lim\limits_{K\to\infty}x^{*}_{K}=\frac{-\frac{2}{\gamma}(1-\theta)}{-\theta}=\frac{4\mu}{\gamma(\sigma^{2}-2\mu)}=x^{*},
\end{equation*}
and
\begin{align*}
&\lim\limits_{K\to\infty}A^{(K)}=(x^{*})^{\frac{2\mu}{\sigma^{2}}},\\
&\lim\limits_{K\to\infty}B^{(K)}x^{\kappa}=0,\forall x\in(x^{*},+\infty),\\
&\lim\limits_{K\to\infty}C^{(K)}=(x^{*})^{\frac{2\mu}{\sigma^{2}}}-\frac{\gamma}{2}(x^{*})^{1+\frac{2\mu}{\sigma^{2}}},\\
&\lim\limits_{K\to\infty}D^{(K)}x^{\kappa}=0,\forall x\in(x^{*},+\infty).
\end{align*}
Then it follows that $V^{K}\rightarrow V$ and $g^{K}\rightarrow g$ pointwise.

In this example, the polynomial growth condition (b) of Theorem \ref{main_conv2} requires that for $\varphi=g^{K},(g^{K})',(g^{K})''$, there exist constants $C$ and $M$ such that $|\varphi(x)|\le C(x^{M}+1)$.

Using the explicit expression of $g^{K}$, we only have that
\begin{equation*}
|g^{K}(x)|,|x(g^{K})'(x)|,|x^{2}(g^{K})''(x)|\le C(x^{M}+1)
\end{equation*}
for some constants $C$ and $M$, while the uniform polynomial growth condition (b) actually fails for $\varphi=(g^{K})',(g^{K})''$. However, the proof of Theorem \ref{conv_1}, Theorem \ref{conv_2} still holds  because we actually need the polynomial bound for $\mathcal{A}g^{K}(x)=\mu x(g^{K})'(x)+\frac{1}{2}\sigma^{2}x^{2}(g^{K})''$. Therefore, the subsequence convergence result of Theorem \ref{main_conv2} still holds in this example. Then, We conclude that $(V^{K},g^{K})$ has a convergent subsequence, and the limit is a solution to (\ref{vi_example2}). As the pointwise limits of $(V^{K},g^{K})$ is $(V,g)$, the conclusion holds.
\end{proof}

\begin{figure}[htbp]
\centering
\includegraphics[width=0.95\linewidth, height=1\textheight, keepaspectratio]{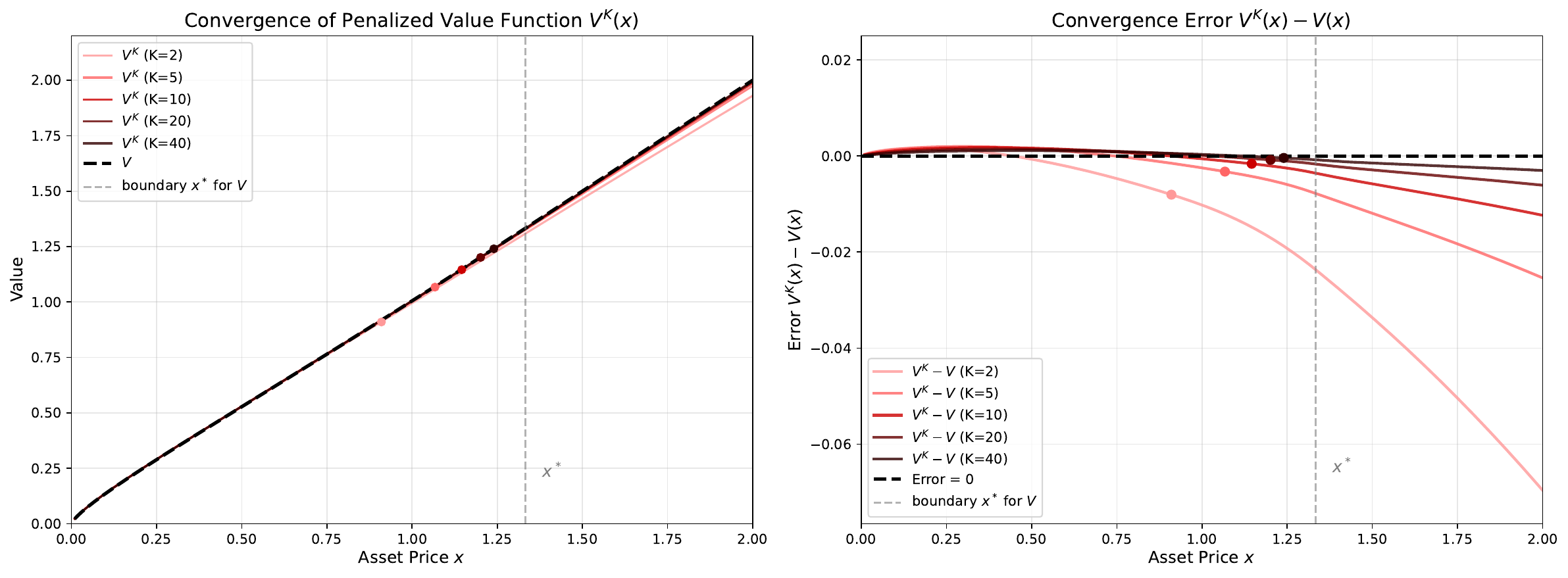}
\caption{Convergence in the mean-variance objective case. The model parameters are set as $\mu=0.02,\sigma=0.4,\gamma=0.5$.}
\label{fig2}
\end{figure}

\noindent
\textbf{Acknowledgements}:  X. Luo and X. Yu are supported by the Hong Kong RGC General Research Fund (GRF) under grant no. 15214125 and the Research Centre for Quantitative Finance at the Hong Kong Polytechnic University under grant no. P0042708.

\bibliographystyle{plainnat}
\bibliography{references}
\end{document}